\documentclass{article}
\usepackage[utf8]{inputenc}
\usepackage{fullpage}

\usepackage{graphicx}
\usepackage{hyperref}
\usepackage{amsmath,amssymb,amsthm,amsfonts}
\usepackage{todonotes}
\usepackage{comment}
\usepackage{breqn}
\usepackage{csquotes}
\usetikzlibrary{automata, positioning,calc,decorations.pathreplacing,angles,quotes}

\makeatletter
\def\blfootnote{\xdef\@thefnmark{}\@footnotetext}
\makeatother

\newtheorem{thm}{Theorem}[section]
\newtheorem{cor}{Corollary}[section]

\newtheorem{prop}{Proposition}[section]
\newtheorem{lem}{Lemma}[section]

\numberwithin{equation}{section}

\theoremstyle{remark}
\newtheorem{rem}{Remark}

\DeclareMathOperator{\nul}{Null}
\newcommand{\Z}{{\mathbb Z}}
\newcommand{\R}{{\mathbb R}}
\newcommand{\N}{{\mathbb N}}

\newcommand{\email}[1]{\href{mailto:#1}{\texttt{{\small #1}}}}

\newcounter{row}
\newcounter{col}

\newcommand\setrow[9]{
  \setcounter{col}{1}
  \foreach \n in {#1, #2, #3, #4, #5, #6, #7, #8, #9} {
    \edef\x{\value{col} - 0.5}
    \edef\y{9.5 - \value{row}}
    \node[anchor=center] at (\x, \y) {$\n$};
    \stepcounter{col}
  }}

\title{Finite-Memory Elephant Random Walk and the Central Limit Theorem for Additive Functionals\blfootnote{MSC2010: 60J10, 60J55, 60F05.}\blfootnote{Keywords: central limit theorem, additive functional, finite-state Markov chain, elephant random walk.}\footnote{Research performed during \href{http://markov-chains-reu.math.uconn.edu}{Markov Chains REU}, partially supported by NSA grant H98230-19-1-0022 to Iddo Ben-Ari.}} 
\author{Iddo Ben-Ari \\ \email{iddo.ben-ari@uconn.edu}
\and 
Jonah Green\\ \email{jonahgreen1129@yahoo.com} \and Taylor Meredith \\ \email{taylor.meredith@nyu.edu} \and   Hugo Panzo\footnote{Supported at the Technion by a Zuckerman Fellowship.} \\ \email{panzo@campus.technion.ac.il}\and Xiaoran  Tan\\ \email{xiaoran.tan@uconn.edu}}
\date{\today}
\begin{document}
\maketitle

\abstract{The Central Limit Theorem (CLT) for additive functionals of Markov chains is a well known result with a long history.  In this paper we present applications to two finite-memory versions of the Elephant Random Walk, solving a problem from \cite{Gut18}. We also present a derivation of the CLT for additive functionals of finite state Markov chains, which is based on positive recurrence, the  CLT for IID sequences and some elementary linear algebra, and which focuses on characterization of the variance.}  
\section{Introduction} 
\subsection{The Central Limit Theorem for Additive Functionals} 
Let  $P$ be  a transition function on a finite state space. For simplicity, we will assume that the state space is  $\{1,\dots,N\}$ where $N\in\N$.  We will treat $P$ as an $N\times N$ stochastic matrix (sum of each row is $1$) with $P(i,j)$ being the entry at the $i$-th row and $j$-th column, representing transition from state $i$ to state $j$.  We write $P^n$ for the $n$-th power of the matrix $P$,  $P^{n+1}=  P^n P $.  We will assume that $P$ is {\it irreducible}, that is, for  every $i,j$, there exists $n=n(i,j)$ such that
\begin{equation} 
\label{eq:irreducible} 
P^n (i,j)>0.
\end{equation} 
Under \eqref{eq:irreducible}, $P$ has a unique stationary distribution $\pi$ (see Section \ref{seC:potential}). In the sequel, if  $u:\{1,\dots,N\}\to \R$, we write  $\pi(u)$ meaning $\sum_i \pi(i) u(i)$. 

Let ${\bf X}=(X_0,X_1,\dots)$ be a Markov chain with transition function $P$. That is, $X_0,X_1,\dots$ are random variables with the property
 \begin{equation} 
 \label{eq:markov_property}
 \mathbb{P}(X_{n+1}=j | X_{n}=i_n,\dots,X_{0}=i_0) = P(i_n,j).
 \end{equation} 
 Equation \eqref{eq:markov_property} is known as the {\it Markov property}. We adopt the usual notation $\mathbb{P}_i(\cdot):=\mathbb{P}(\cdot |X_0=i)$ and use $\mathbb{E}_i[\cdot]:=\mathbb{E}[\cdot |X_0=i]$ for the corresponding expectation. More generally, we use $\mathbb{P}_\mu(\cdot)$ and $\mathbb{E}_\mu[\cdot]$ when the starting point has distribution $\mu$.

 For a function $f:\{1,\dots,N\}\to \R$, define the {\it additive functional}
 \begin{equation} 
 \label{eq:additive_functional} 
 I_n (f)= \sum_{k=0}^{n-1} f(X_k).
 \end{equation}
  Note that for any given $f$, 
$$
\bar f := f-\pi(f){\bf 1}
$$
 satisfies $\pi(\bar f)=0$.  
 Suppose $ \pi(g)=0$. Then 
 \begin{equation}
 \label{eq:unique_sol}
 \begin{cases} (I-P) u = g\\ \pi( u)=0\end{cases}
 \end{equation} 
 has a unique solution we denote by $\bar U_g$ (Corollary \ref{cor:unique_sol}).
 \begin{lem}
 \label{lem:var_lemma} 
 Suppose $f:\{1,\dots,N\}\to \R$ and $\mu$ is any probability measure on $\{1,\dots,N\}$ and let $\bar f$ and $\bar U_{\bar f}$ be as above. Define 
 $$ \sigma^2_{f}= \pi\left( (\bar U_{\bar f})^2 - (P\bar U_{\bar f} )^2\right).$$
 \begin{enumerate}
     \item If $(I-P)u=\bar f$, then
     \begin{equation} 
     \label{eq:general_variance} \sigma^2_f = \pi\left( u^2 - (Pu)^2\right).
     \end{equation}
     \item $\displaystyle\sigma^2_f = \lim_{n\to\infty} \frac{\mathbb{E}_{\mu} [I_n (\bar f)^2]}{n}$
 \end{enumerate}
 \end{lem}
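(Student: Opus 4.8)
The plan is to establish part (1) by a short linear-algebra argument and then part (2) by the classical martingale (Poisson-equation) decomposition of $I_n(\bar f)$, combined with the Cesàro form of the ergodic theorem, $\frac1n\sum_{k=0}^{n-1}P^k\to\Pi$, where $\Pi$ is the matrix all of whose rows equal $\pi$ (available under irreducibility alone, from the potential-theory section).

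For part (1): since $P$ is irreducible, the null space of $I-P$ is one-dimensional and spanned by $\mathbf 1$ (this underlies Corollary \ref{cor:unique_sol}), so every solution of $(I-P)u=\bar f$ has the form $u=\bar U_{\bar f}+c\mathbf 1$ for some $c\in\R$. Using $P\mathbf 1=\mathbf 1$, one expands $u^2-(Pu)^2=(\bar U_{\bar f})^2-(P\bar U_{\bar f})^2+2c\,(\bar U_{\bar f}-P\bar U_{\bar f})$. Applying $\pi$ and using $\pi P=\pi$ (so $\pi(P\bar U_{\bar f})=\pi(\bar U_{\bar f})$) kills the cross term, giving $\pi(u^2-(Pu)^2)=\sigma_f^2$.

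For part (2): fix $u:=\bar U_{\bar f}$, so that $\bar f=u-Pu$, i.e. $\bar f(X_k)=u(X_k)-(Pu)(X_k)$. With $\mathcal F_k=\sigma(X_0,\dots,X_k)$ and $D_{k+1}:=u(X_{k+1})-(Pu)(X_k)$, the Markov property gives $\mathbb E_\mu[D_{k+1}\mid\mathcal F_k]=0$, so $M_n:=\sum_{k=0}^{n-1}D_{k+1}$ is a martingale; telescoping $\sum_{k=0}^{n-1}\bigl(u(X_k)-u(X_{k+1})\bigr)=u(X_0)-u(X_n)$ yields the decomposition $I_n(\bar f)=M_n+u(X_0)-u(X_n)$. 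Since $u$ is bounded on the finite state space, the correction term contributes $O(1)$ to $\mathbb E_\mu[I_n(\bar f)^2]$ and, by Cauchy--Schwarz, the cross term is $O\!\bigl(\sqrt{\mathbb E_\mu[M_n^2]}\bigr)=O(\sqrt n)$; hence it suffices to show $\mathbb E_\mu[M_n^2]/n\to\sigma_f^2$. By orthogonality of martingale increments, $\mathbb E_\mu[M_n^2]=\sum_{k=0}^{n-1}\mathbb E_\mu[D_{k+1}^2]$, and conditioning on $X_k$ gives $\mathbb E_\mu[D_{k+1}^2\mid X_k]=(Pu^2)(X_k)-(Pu)(X_k)^2=:h(X_k)$. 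Therefore $\mathbb E_\mu[M_n^2]=\sum_{k=0}^{n-1}(\mu P^k)(h)$, and the Cesàro ergodic theorem gives $\mathbb E_\mu[M_n^2]/n\to\pi(h)=\pi(u^2)-\pi((Pu)^2)=\sigma_f^2$, where $\pi(Pu^2)=\pi(u^2)$ again by $\pi P=\pi$. Combining the three pieces proves $\mathbb E_\mu[I_n(\bar f)^2]/n\to\sigma_f^2$.

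The main obstacle is not a deep one: the delicate points are (a) setting up the decomposition and the increment-orthogonality cleanly, and (b) invoking the correct limit theorem — namely that the Cesàro averages of $P^k$ converge to $\Pi$ under irreducibility, with no aperiodicity assumption, which should already be in hand. I note in passing that the martingale decomposition works verbatim for \emph{any} solution $u$ of $(I-P)u=\bar f$, which gives an alternative route that simultaneously yields part (1) (the limit $\pi(u^2-(Pu)^2)$ is seen to be independent of the chosen solution) and part (2).
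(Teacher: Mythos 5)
Your proof is correct, and for part (2) it takes a genuinely different route from the paper. Part (1) is essentially the paper's own argument (the computation preceding Corollary~\ref{cor:variance}): any two solutions of $(I-P)u=\bar f$ differ by a constant multiple of $\mathbf 1$, and the cross term $2c\,\pi\bigl(\bar U_{\bar f}-P\bar U_{\bar f}\bigr)$ vanishes under $\pi$ (the paper kills it by noting it equals $2c\,\pi(\bar f)=0$; your use of $\pi P=\pi$ is equivalent). For part (2), however, the paper deliberately avoids the martingale machinery: it decomposes $I_n(\bar f)$ over the IID excursions between successive returns to a fixed state $i_0$ (Sections~\ref{sec:IID} and~\ref{sec:subseq}), obtains $\mathbb{E}_{i_0}[I_{T^m}(\bar f)^2]=\mathbb{E}_{i_0}[T^m]\,\pi(i_0)\,\sigma^2_{i_0}(I_{T_{i_0}}(\bar f))$ from the IID structure, identifies $\pi(i_0)\sigma^2_{i_0}(I_{T_{i_0}}(\bar f))$ with $\pi(u^2-(Pu)^2)$ via the potential-function calculus (Proposition~\ref{prop:finally} and Corollary~\ref{cor:variance}), and then interpolates from the random times $T^m$ to deterministic times. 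Your Poisson-equation decomposition $I_n(\bar f)=M_n+u(X_0)-u(X_n)$ with orthogonal martingale increments is shorter and gives the variance formula $\pi(Pu^2-(Pu)^2)=\pi(u^2-(Pu)^2)$ in one line; it is exactly the ``martingale CLT'' viewpoint the introduction cites as the standard modern treatment, whereas the paper's point is to reach the same variance by recurrence and linear algebra alone. Your closing remark that the decomposition works for any solution $u$, so that the limit itself certifies part (1), is a nice economy the paper does not exploit.

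One caveat: the Cesàro limit $\frac1n\sum_{k=0}^{n-1}\mu P^k\to\pi$ is \emph{not} actually established in the paper's potential-theory section (Section~\ref{seC:potential} only constructs $\pi$ and proves $\pi(i_0)=1/\mathbb{E}_{i_0}[T_{i_0}]$), so you cannot take it as ``already in hand.'' It is true under irreducibility alone, but you must supply an argument --- e.g.\ take expectations in the a.s.\ convergence $\frac1n\sum_{k=0}^{n-1}\mathbf 1_{\{X_k=j\}}\to\pi(j)$ obtained from the regeneration structure (which is how the paper gets its LLN), or give a short Perron--Frobenius argument that handles periodic chains. With that one step filled in, the proof is complete.
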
 
  The following theorem is well-known.  Our goal in presenting a proof of the results in their  weakest form  is to provide a reference that is intuitive and accessible to non-experts, building on the all-familiar Central Limit Theorem (CLT) for IID random variables and focusing on the derivation of \eqref{eq:general_variance} through elementary linear algebra. We believe that the very broad range of applications of finite-state Markov chains merits such a presentation, though this specific work was motivated by our study of finite-memory versions of the Elephant Random Walk which are discussed in Section \ref{sec:elephant}. 
 \begin{thm}
 \label{th:CLT}
 Suppose $P$ is irreducible on a finite state space and let $f:\{1,\dots,N\}\to \R$ with $\bar f$ and $\sigma_f^2$ as above.  Then   \begin{enumerate} 
 \item (Law of Large Numbers) $\displaystyle \frac{I_n (\bar f)}{n}\to 0$, a.s. 
 \item (Central Limit Theorem) $\displaystyle \frac{I_n (\bar f)}{\sqrt{n}} \Rightarrow N(0,\sigma_f^2)$.
 \end{enumerate}
 \end{thm}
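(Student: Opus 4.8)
\emph{Proof proposal.} The plan is to reduce the additive functional to a sum of i.i.d.\ terms by cutting the trajectory at successive visits to a fixed reference state, and then invoke the SLLN and CLT for i.i.d.\ sequences. Since $P$ is irreducible on a finite state space it is positive recurrent. Fix the state $1$, let $T_0:=\inf\{n\ge 0: X_n=1\}$ and, for $k\ge 1$, $T_k:=\inf\{n>T_{k-1}:X_n=1\}$; put $\tau_k:=T_k-T_{k-1}$ and
$$
Y_k:=\sum_{j=T_{k-1}}^{T_k-1}\bar f(X_j),\qquad k\ge 1.
$$
By the strong Markov property, under $\mathbb{P}_\mu$ the pairs $(\tau_k,Y_k)_{k\ge 1}$ are i.i.d.\ with the law of $(\tau,Y_1)$ under $\mathbb{P}_1$ (here $\tau$ is the first return time to $1$), and $T_0<\infty$ a.s. As the state space is finite, $\tau$ has geometric tails, so $m_1:=\mathbb{E}_1[\tau]<\infty$ and $\mathbb{E}_1[\tau^p]<\infty$, $\mathbb{E}_1[|Y_1|^p]<\infty$ for every $p$. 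The one genuinely algebraic input is that $i\mapsto \mathbb{E}_1\!\left[\#\{0\le j<\tau:X_j=i\}\right]$ is a stationary measure, hence equals $m_1\pi$ by uniqueness; pairing against $\bar f$ gives $\mathbb{E}_1[Y_1]=m_1\,\pi(\bar f)=0$. Write $s^2:=\mathbb{E}_1[Y_1^2]$.

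For part (1), let $N_n:=\max\{m\ge 0:T_m\le n\}$. The SLLN for $(\tau_k)$ gives $T_m/m\to m_1$ a.s., hence $N_n/n\to m_1^{-1}$ a.s.\ and $N_n\to\infty$; the SLLN for $(Y_k)$ gives $m^{-1}\sum_{k=1}^{m}Y_k\to 0$ a.s. On $\{n\ge T_0\}$,
$$
I_n(\bar f)=\sum_{j=0}^{T_0-1}\bar f(X_j)+\sum_{k=1}^{N_n}Y_k+\sum_{j=T_{N_n}}^{n-1}\bar f(X_j),
$$
where the first term is a.s.\ finite, the middle term is $o(N_n)=o(n)$ a.s., and the last is bounded by $\|\bar f\|_\infty\,\tau_{N_n+1}$, which is $o(n)$ a.s.\ because $\tau_m/m\to 0$ a.s.; dividing by $n$ proves the law of large numbers.

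For part (2), the classical i.i.d.\ CLT gives $m^{-1/2}\sum_{k=1}^{m}Y_k\Rightarrow N(0,s^2)$, and since $N_n/n\to m_1^{-1}$ a.s., Anscombe's random-index theorem upgrades this to $n^{-1/2}\sum_{k=1}^{N_n}Y_k\Rightarrow N(0,s^2/m_1)$. In the display above the first term is $O_{\mathbb{P}}(1)$ and the last term is bounded by $\|\bar f\|_\infty\,\tau_{N_n+1}=o_{\mathbb{P}}(\sqrt n)$: conditioning on $\{N_n=m\}$, which is independent of $\tau_{m+1}$, and summing over $m$ bounds $\mathbb{P}(\tau_{N_n+1}>\varepsilon\sqrt n)$ by $\mathbb{P}_\mu(N_n>2n/m_1)+\mathbb{P}_1(\tau>\varepsilon\sqrt n)\to 0$. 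By Slutsky, $I_n(\bar f)/\sqrt n\Rightarrow N(0,s^2/m_1)$. It remains to identify $s^2/m_1$ with $\sigma_f^2$: by Lemma~\ref{lem:var_lemma}(2) this amounts to $\mathbb{E}_\mu[I_n(\bar f)^2]/n\to s^2/m_1$, which follows from the weak convergence just established once $\{(I_n(\bar f))^2/n\}_{n\ge1}$ is uniformly integrable; uniform integrability in turn follows from $\sup_n \mathbb{E}_\mu[\,|I_n(\bar f)|^{2+\delta}\,]/n^{1+\delta/2}<\infty$ for some $\delta>0$, obtained from the same decomposition via a Rosenthal-type bound for the i.i.d.\ sum and the finite high moments of $\tau$ and $Y_1$.

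The main obstacle is the passage from the i.i.d.\ CLT for a \emph{deterministic} number of excursions to the statement for the chain run $n$ steps: this requires the Anscombe random-index step, the control of the two edge effects (the pre-$T_0$ segment and the final incomplete excursion), and the uniform-integrability estimate used to pin the limiting variance down to $\sigma_f^2$. Everything else is the i.i.d.\ SLLN and CLT together with the elementary excursion identity for $\pi$. (A shortcut that produces the variance $\pi(u^2-(Pu)^2)$ directly is the martingale decomposition $I_n(\bar f)=M_n+u(X_0)-u(X_n)$ with $(I-P)u=\bar f$, to which one applies the martingale CLT: the Lindeberg condition is trivial since $u$ is bounded, and $n^{-1}\sum_{k=1}^{n}\mathbb{E}[(M_k-M_{k-1})^2\mid\mathcal{F}_{k-1}]\to\pi\!\left(Pu^2-(Pu)^2\right)=\pi\!\left(u^2-(Pu)^2\right)$ by part (1) applied to the function $Pu^2-(Pu)^2$.)
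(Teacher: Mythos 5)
Your probabilistic skeleton is the same as the paper's: regenerate at a fixed state, observe that the excursion sums are i.i.d., apply the i.i.d.\ SLLN/CLT along the random subsequence of return times (Section \ref{sec:IID}), and control the two boundary terms to pass to deterministic times (Section \ref{sec:subseq}). Where you genuinely diverge is in identifying the limiting variance. The paper computes $s^2=\mathbb{E}_{i_0}\big[(I_{T_{i_0}}(\bar f))^2\big]$ in closed form by algebraic manipulation of the potential operator: Lemma \ref{lem:U2f} writes the second moment of an excursion sum as the potential of $2fU_f-f^2$, Proposition \ref{prop:finally} turns this into $U_{(U_g)^2-(PU_g)^2}$, and Corollary \ref{cor:variance} then gives $s^2/m_1=\pi(u^2-(Pu)^2)=\sigma_f^2$ directly, with no moment assumptions beyond the second. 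You instead prove $\mathbb{E}_\mu[I_n(\bar f)^2]/n\to s^2/m_1$ via uniform integrability (Rosenthal plus high moments of $\tau$) and then quote Lemma \ref{lem:var_lemma}(2) to conclude $s^2/m_1=\sigma_f^2$. That is logically sound, but it outsources the substantive step: Lemma \ref{lem:var_lemma}(2) is precisely the statement the paper only sketches, and its proof rests on the same potential computation, so your route does not bypass the algebraic identification of $\pi(u^2-(Pu)^2)$ so much as assume it. The martingale decomposition in your closing parenthetical is the cleanest self-contained way to get that formula and is the standard alternative the paper cites. Two small repairs are needed: (i) $\{N_n=m\}=\{T_m\le n<T_{m+1}\}$ is \emph{not} independent of $\tau_{m+1}$; replace it by the larger event $\{T_m\le n\}$, which is, and accept that the resulting union bound costs a factor of order $n$, harmless because $\tau$ has geometric tails (the paper's cleaner route applies the Markov property at time $n$ and bounds the residual hitting time by $\max_i\mathbb{E}_i[T_{i_0}]/(\epsilon\sqrt n)$); (ii) in the Rosenthal step the number of summands $N_n$ is random, so first dominate $\big|\sum_{k\le N_n}Y_k\big|$ by $\max_{m\le n}\big|\sum_{k\le m}Y_k\big|$ and apply Doob's inequality before Rosenthal.
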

The theorem has a number of proofs under weaker assumptions, including more detailed results (e.g. functional CLT).  The most effective treatment of the CLT to date is through the martingale CLT,  \cite{varadhan_book}. Specific applications to additive functionals of Markov chains can be found in \cite{kipnis},\cite{maxwell},\cite{CLT_peligrad} as well as in many other resources, such as the review paper \cite{sep} (in continuous time).  The proof we present in this paper is different and more rudimentary,  based on (positive) recurrence. We believe it also explains the expression for the variance in an intuitive way, through linear algebra and moment calculations.  Using recurrence to prove the CLT for additive functionals can be  also found in  \cite[p. 416]{meyn}, where the result is more general than ours, yet  does not provide the expression for the variance \eqref{eq:general_variance}.  

\subsection{Proof of  Theorem \ref{th:CLT} and Lemma \ref{lem:var_lemma}}
\subsubsection{Outline} 
The proof has three parts. In the first  part, Section \ref{seC:potential}, we introduce the potential function and use some linear algebra to study its properties. This discussion will then be used in the  characterization of the variance in Theorem \ref{th:CLT}. In the second part, Section \ref{sec:IID}, we use the recurrence structure of the Markov chain and the classical Law of Large Numbers (LLN) and CLT for IID sequences to prove the theorem along  (random) subsequences. In the third part, Section \ref{sec:subseq},  we extend from subsequences to all sequences, completing the proof of the theorem. The first statement in Lemma \ref{lem:var_lemma} is Corollary \ref{cor:variance}, and we outline the proof to the second statement in Section \ref{sec:subseq}. We only outline this because the result is well-known and because the verification is routine and is of a similar nature to the calculation we use for the CLT. 
\subsubsection{The potential function} 
\label{seC:potential}
 Functions from the state space $\{1,\dots,N\}$ will be considered as column vectors. That is, if $f:\{1,\dots,N\}\to \R$, then $f$ is identified with the $N\times 1$ vector $(f(1),\dots,f(N))^t$, where $t$ represents the transpose. We write ${\bf 1}$ for the constant function ${\bf 1}(i)=1$ for all $i$.  Measures on the state space will be considered as row vectors. 

Since $P$ is stochastic, $P {\bf 1}={\bf 1}$. That is, ${\bf 1}$ is an eigenvector for $P$ corresponding to the eigenvalue $1$. Let $v$ be a nonzero vector satisfying $P v =v$. Hence $P^n v=v$ for all $n\in\mathbb{N}$ as well. We will show that $v$ is constant. Let $i$ be such that $v(i)\ge v(i')$ for all $i'$. Without loss of generality we may assume $v(i)\ge 0$. We can write  
$$v(i) = \sum_{j}P^n(i,j) v(j) =v(i) + \sum_{j} P^n (i,j) \Big(v(i) - v(j)\Big)$$
which holds for all $n\in\mathbb{N}$. Since the sum on the right-hand side must be zero with each summand nonnegative, it follows that $v(j)=v(i)$ if $P^n(i,j)>0$. Irreducibility \eqref{eq:irreducible} now implies that $v(j)=v(i)$ for all $j$. 

From this it follows that $\nul (I-P)$, the null space of $I-P$, is spanned by ${\bf 1}$. Therefore, the image of $I-P$ has dimension $N-1$, and its orthogonal complement is one-dimensional. A row vector $v$ is in this  orthogonal complement if and only if it is orthogonal to (the transpose of) each of the columns of  $I-P$, which is equivalent to  $v(I- P)=0$, or simply  $vP =v$. Fix nonzero $v$ in this subspace. By the triangle inequality, $|v(j)|\le \sum_{i} |v(i)| P(i,j)$. Summing over $j$, we have $\sum_{j} |v(j)|\le\sum_{i} |v(i)|$. Since this is actually an equality, it follows that $|v(j)|=\sum_{i} |v(i)| P(i,j)$ for all $j$. Therefore $v$ does not change sign.  As a result of the irreducibility, all entries of $v$ are nonzero. We let $\pi$ be the element in this orthogonal complement normalized to be a probability measure. We call $\pi$ the {\it stationary distribution} for $P$. The condition $\pi (I-P)=0$ is equivalent to $\pi P = P$. Of course, this implies $\pi P^n = P$ for all $n$. In terms of the Markov chain itself, if $X_0$ has distribution $\pi$, then $X_1$ has distribution $\pi$, and so do $X_2,X_3,\dots$. This is why $\pi$ is called the stationary distribution. We also observe that 
\begin{prop}\label{cor:unique_sol}
\leavevmode
\begin{enumerate} 
\item The equation $(I-P)u=g$ has a solution if and only if $\pi(g)=0$, and any two solutions differ by a constant.
\item In particular, if $\pi(g)=0$, equation \eqref{eq:unique_sol} has a unique solution. 
\end{enumerate}
\end{prop}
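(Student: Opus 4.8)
The plan is to read off both statements directly from the linear-algebra facts already assembled in the discussion of the potential function, namely that $\nul(I-P)=\mathrm{span}\{{\bf 1}\}$, that $\mathrm{Image}(I-P)$ has dimension $N-1$, and that the (one-dimensional) orthogonal complement of $\mathrm{Image}(I-P)$ is spanned by the column vector $\pi^t$, equivalently by the row vector $\pi$ satisfying $\pi(I-P)=0$.

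First I would prove the solvability criterion in part (1). Since $\mathrm{Image}(I-P)$ is a subspace of $\R^N$, a column vector $g$ lies in it if and only if $g$ is orthogonal to its orthogonal complement, i.e. orthogonal to $\pi^t$. But the Euclidean inner product of $\pi^t$ and $g$ is $\sum_i \pi(i)g(i)=\pi(g)$, so $(I-P)u=g$ has a solution if and only if $\pi(g)=0$. For the remaining claim in (1), if $(I-P)u_1=g$ and $(I-P)u_2=g$ then $(I-P)(u_1-u_2)=0$, so $u_1-u_2\in\nul(I-P)=\mathrm{span}\{{\bf 1}\}$; that is, $u_1$ and $u_2$ differ by a constant.

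Part (2) then follows by pinning down the free constant. Assume $\pi(g)=0$ and let $u_0$ be any solution furnished by part (1); every solution of $(I-P)u=g$ has the form $u_0+c{\bf 1}$ with $c\in\R$. Since $\pi(u_0+c{\bf 1})=\pi(u_0)+c$, the value $c=-\pi(u_0)$ is the unique choice making $\pi(u)=0$, so $u_0-\pi(u_0){\bf 1}$ is the unique solution of \eqref{eq:unique_sol}, which we denote $\bar U_g$.

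I do not expect a genuine obstacle here, since every ingredient was established above; the only point that needs care is the bookkeeping between "$g$ lies in the column space of $I-P$" and "$g$ is annihilated by the left null vector $\pi$" — that is, keeping the row/column and inner-product conventions straight — and I would spell out that translation explicitly rather than leave it implicit.
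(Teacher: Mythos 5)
Your proof is correct and follows essentially the same route as the paper: both read part (1) off the facts that $\nul(I-P)=\mathrm{span}\{{\bf 1}\}$ and that the image of $I-P$ is the orthogonal complement of $\pi$, and both pin down the constant in part (2) by applying $\pi$. You are slightly more explicit than the paper (which only argues uniqueness in part (2), leaving existence implicit), but this is the same argument spelled out in more detail.
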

\begin{proof}
The first statement is clear because the image of $(I-P)$ is the orthogonal complement of the span of $\pi$ and the null space of $(I-P)$ consists of constants. 

For the second statement, let $u$ and $v$ be any solutions to equation \eqref{eq:unique_sol}. Then by the first statement we have $u-v=c{\bf 1}$. Applying $\pi$ to both sides of this results in $\pi(u-v)=c$ which implies $c=0$ by hypothesis. It follows that $u=v$.  
\end{proof} 

Next we add a little more of probability into the mix. In what follows, we first fix a state $i_0$ and then define the \emph{hitting time} of $i_0$ by 
\begin{equation} 
\label{eq:Ti0}T_{i_0} = \inf\{n\ge 1: X_n = i_0\}.
\end{equation} 
In words, $T_{i_0}$ is the first time the process visits $i_0$ after time $0$. By irreducibility and since the state-space is finite, there exists $k\in\N$ and $0<q<1$ such that $\mathbb{P}_{i}(T_{i_0}>k)\leq q$ for all $i$. From this it follows that 
\begin{equation}
\label{eq:finite_time}
\max_i \mathbb{P}_i( T_{i_0}>n k) \le q^n,
\end{equation} 
and in particular, $T_{i_0}$ has finite expectation (and finite MGF in some open interval containing $0$). 

Let $f:\{1,\dots,N\}\to {\mathbb R}$ and define the function of $i$
\begin{equation}
\label{U_f}
U_f(i,i_0)= \mathbb{E}_i\left[I_{T_{i_0}}(f)\right] =\mathbb{E}_i \left[ \sum_{k=0}^{T_{i_0}-1} f(X_k)\right].
\end{equation}
In words, $U_f(i, i_0)$ is the average of the sum of values of $f$ along the path of ${\bf X}$ started at $i$ up to one step before it hits $i_0$ for the first time after time $0$. We refer to $U_f$ as the \emph{potential} of $f$.

We wish to get rid of the random limit in the summation, and change the order of summation and integration. This gives: 
\begin{equation}
\label{U_f with indicator}
\begin{aligned}  
U_f(i,i_0) 
&= \mathbb{E}_i \left[ \sum_{k=0}^\infty {\bf 1}_{\{T_{i_0}>k\}}f(X_k) \right]\\ 
&= \sum_{k=0}^\infty \mathbb{E}_i\left[f(X_k),T_{i_0}>k\right].
\end{aligned}
\end{equation}

\begin{lem}\label{lem:U_map} 
The mapping $f\to U_f(\cdot,i_0)$ is linear and satisfies the following: 
\begin{enumerate} 
\item $\displaystyle U_{(I-P)f}(\cdot,i_0)= f (\cdot)-f(i_0)\bf{1}$\\
\item $\displaystyle U_{\delta_{i_0}}(\cdot,i_0) =\delta_{i_0}(\cdot)$
\end{enumerate}
In particular, if $h$ is in the image of $I-P$, then $U_h (i_0,i_0)=0$ and $(I-P)U_h =h$.
\end{lem}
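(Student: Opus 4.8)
The plan is to establish linearity first, then the two displayed identities, and finally deduce the ``in particular'' statement as a formal consequence. Linearity of $f \mapsto U_f(\cdot,i_0)$ is immediate from the series representation \eqref{U_f with indicator}: each term $\mathbb{E}_i[f(X_k), T_{i_0}>k]$ is linear in $f$, and the sum converges absolutely because $\sup_{\{1,\dots,N\}}|f|$ is finite and $\sum_k \mathbb{P}_i(T_{i_0}>k) = \mathbb{E}_i[T_{i_0}] < \infty$ by \eqref{eq:finite_time}. So linearity needs only a sentence.

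For identity (1), I would compute $U_{(I-P)f}(i,i_0)$ directly from the definition \eqref{U_f}. Write $(I-P)f(X_k) = f(X_k) - Pf(X_k) = f(X_k) - \mathbb{E}[f(X_{k+1})\mid X_k]$, so by the Markov property $\mathbb{E}_i[(I-P)f(X_k)\mid \mathcal{F}_k] = f(X_k) - \mathbb{E}_i[f(X_{k+1})\mid \mathcal{F}_k]$ on $\{T_{i_0} > k\}$, which is $\mathcal{F}_k$-measurable. Summing $k$ from $0$ to $T_{i_0}-1$ telescopes, giving $I_{T_{i_0}}((I-P)f) = f(X_0) - f(X_{T_{i_0}}) = f(X_0) - f(i_0)$ since $X_{T_{i_0}} = i_0$ by definition of the hitting time. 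Taking $\mathbb{E}_i$ of both sides yields $U_{(I-P)f}(i,i_0) = f(i) - f(i_0)$, which is the claim. The one point requiring care is justifying the interchange of the expectation with the random-length sum; this is where I expect the main (mild) obstacle to be, and I would handle it either by working with the absolutely convergent series form \eqref{U_f with indicator} throughout — replacing $f$ by $(I-P)f$ there and rearranging — or by invoking optional stopping for the martingale $M_k = f(X_{k\wedge T_{i_0}}) + \sum_{j=0}^{k\wedge T_{i_0}-1}(I-P)f(X_j)$, whose increments are bounded and whose stopping time has finite expectation.

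For identity (2), note $\delta_{i_0}(X_k) = {\bf 1}_{\{X_k = i_0\}}$, and on the event $\{T_{i_0} > k\}$ with $k \geq 1$ we have $X_k \neq i_0$, so the only surviving term in $\sum_{k=0}^{T_{i_0}-1}\delta_{i_0}(X_k)$ is $k=0$, contributing $\delta_{i_0}(X_0)$. Hence $U_{\delta_{i_0}}(i,i_0) = \mathbb{E}_i[\delta_{i_0}(X_0)] = \delta_{i_0}(i)$, i.e. $U_{\delta_{i_0}}(\cdot,i_0) = \delta_{i_0}(\cdot)$.

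Finally, for the ``in particular'' clause: if $h = (I-P)f$ for some $f$, then evaluating identity (1) at $i = i_0$ gives $U_h(i_0,i_0) = f(i_0) - f(i_0) = 0$. For the equation $(I-P)U_h = h$, I would apply the operator $I-P$ to identity (1): since $(I-P)$ annihilates the constant function $f(i_0){\bf 1}$, we get $(I-P)U_{(I-P)f}(\cdot,i_0) = (I-P)f = h$, where on the left $U_{(I-P)f}(\cdot,i_0) = U_h(\cdot,i_0)$ by definition of $h$. This uses Proposition \ref{cor:unique_sol} only implicitly (that $h$ in the image of $I-P$ means $h = (I-P)f$ for some $f$, equivalently $\pi(h)=0$), and no further input is needed.
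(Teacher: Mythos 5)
Your proposal is correct, and for the main identity it takes a genuinely different route from the paper. The paper proves part (1) by computing $U_{Pf}$ directly from the series representation \eqref{U_f with indicator}: it expands $Pf(X_k)=\sum_j P(X_k,j)f(j)$, uses the Markov property to split the sum into the contribution from $j\ne i_0$ (which re-indexes to $\mathbb{E}_i[f(X_{k+1}),T_{i_0}>k+1]$) and from $j=i_0$ (which produces $f(i_0)\mathbb{P}_i(T_{i_0}=k+1)$), and then sums over $k$; this stays entirely within absolutely convergent sums and the Markov property, consistent with the paper's aim of avoiding martingale machinery. You instead invoke optional stopping for the Doob/Dynkin martingale $M_k=f(X_{k\wedge T_{i_0}})+\sum_{j<k\wedge T_{i_0}}(I-P)f(X_j)$, which is shorter and more standard, at the cost of importing the optional stopping theorem. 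One caution: your phrase ``summing $k$ from $0$ to $T_{i_0}-1$ telescopes, giving $I_{T_{i_0}}((I-P)f)=f(X_0)-f(X_{T_{i_0}})$'' is not a pathwise identity, since $Pf(X_k)$ is a conditional expectation rather than $f(X_{k+1})$; the cancellation happens only after taking $\mathbb{E}_i$. You do flag exactly this issue and the martingale argument repairs it correctly (bounded increments, $\mathbb{E}_i[T_{i_0}]<\infty$), so the proof is sound as long as that sentence is understood as heuristic. Your treatments of linearity, of identity (2), and of the ``in particular'' clause (evaluate at $i_0$; apply $I-P$ and use that it annihilates constants) match the paper's in substance.
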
 
\begin{proof}
The linearity follows from the definition and so does the second identity. To prove the first identity, we recall that $Pf (i) = \sum_{j} P(i,j) f(j)$ and use \eqref{U_f with indicator} to write 
$$ U_{Pf} (i,i_0) = \sum_{k=0}^\infty \mathbb{E}_i\left[ \sum_{j} P(X_k,j)f(j),T_{i_0}>k\right].$$
From the Markov property \eqref{eq:markov_property}, 
$$ \mathbb{E}_i\left [\sum_{j\ne i_0} P(X_k,j) f(j) ,T_{i_0}>k\right] = \mathbb{E}_i\left[f(X_{k+1}),T_{i_0}>k+1\right]$$ 
and 
$$ \mathbb{E}_i\left[ P(X_k,i_0)f(i_0),T_{i_0}>k\right]=f(i_0) \mathbb{P}_i(T_{i_0}=k+1).$$ 
Therefore, 
\begin{align*}
U_{Pf} (i,i_0)&=f(i_0)\sum_{k=0}^\infty \mathbb{P}_i(T_{i_0}=k+1)+\sum_{k=0}^\infty \mathbb{E}_i\left[f(X_{k+1}),T_{i_0}>k+1\right]\\
&=f(i_0)-\mathbb{E}_i\left[f(X_0),T_{i_0}>0\right]+\sum_{k=0}^\infty \mathbb{E}_i\left[f(X_{k}),T_{i_0}>k\right]\\
&=f(i_0)-f(i)+U_f (i,i_0),
\end{align*}
which by linearity gives the first equation. 
\end{proof}

Notice that by Lemma \ref{lem:U_map} we have
\[
U_{(I-P) ({\bf 1}-\delta_{i_0}) + \delta_{i_0}} = {\bf 1}
\]
and this equation can be simplified to
\[
U_{P\delta_{i_0}} = {\bf 1}.
\]
Hence for any $f$ we have
\begin{equation}\label{eq:Uf_all}
U_{(I-P)f + P\delta_{i_0} f} =f.
\end{equation} 
This shows that $f\to U_f (\cdot,i_0)$ is the left (hence also right) inverse of the linear mapping $(I-P) +P\delta_{i_0}$. 
\begin{cor}
\label{cor:uh_inverse}
For any $f$ we have
\begin{equation}\label{eq:inverse}
 f(\cdot) = \Big((I-P) + P\delta_{i_0}\Big) U_f(\cdot,i_0).
 \end{equation}
\end{cor}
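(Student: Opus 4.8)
The plan is to read equation \eqref{eq:Uf_all} as an identity between linear operators on the finite-dimensional space $\R^N$, and then to invoke the elementary fact that a one-sided inverse of a linear map on a finite-dimensional space is automatically a two-sided inverse. Concretely, I would write $\mathcal{U}$ for the linear map $f\mapsto U_f(\cdot,i_0)$, whose linearity is part of Lemma \ref{lem:U_map}, and $A:=(I-P)+P\delta_{i_0}$ for the linear map sending $f$ to $(I-P)f + f(i_0)\,P\delta_{i_0}$; here the symbol $P\delta_{i_0}$ denotes $P$ composed with the coordinate projection $f\mapsto f(i_0)\delta_{i_0}$, so that $A$ and $\mathcal{U}$ are genuinely endomorphisms of $\R^N$ and \eqref{eq:inverse} is just the assertion that these two maps compose to the identity in the order opposite to \eqref{eq:Uf_all}.

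The first step is to observe that \eqref{eq:Uf_all} says precisely $\mathcal{U}(Af)=f$ for every $f$, i.e.\ $\mathcal{U}A=I$ as operators on $\R^N$. The second step is the finite-dimensional linear algebra: $\mathcal{U}A=I$ forces $A$ to be injective (if $Af=0$ then $f=\mathcal{U}Af=0$), hence bijective, hence invertible, and then $A^{-1}=(\mathcal{U}A)A^{-1}=\mathcal{U}$. Consequently $A\mathcal{U}=AA^{-1}=I$, which applied to an arbitrary $f$ is exactly \eqref{eq:inverse}.

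As an alternative that sidesteps the inverse-swapping step, one can verify \eqref{eq:inverse} by a direct computation: conditioning on the first step of the chain (as in the proof of Lemma \ref{lem:U_map}) gives the recursion $U_f(i,i_0)=f(i)+\sum_{j\ne i_0}P(i,j)U_f(j,i_0)$, and adding and subtracting the $j=i_0$ term rewrites this as $(I-P)U_f(\cdot,i_0)(i)=f(i)-P(i,i_0)\,U_f(i_0,i_0)$, which is \eqref{eq:inverse} once the correction term on the right is recognized as $\bigl(P\delta_{i_0}\bigr)$ acting on $U_f(\cdot,i_0)$.

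I do not expect any real obstacle: the substantive work was already done in deriving \eqref{eq:Uf_all}, and the corollary only records the ``right inverse'' half of the statement displayed just before it. The single point requiring care is keeping the meaning of $P\delta_{i_0}$ consistent between \eqref{eq:Uf_all} and \eqref{eq:inverse}, so that the two equations really are the two orders of composition of the same pair of linear maps.
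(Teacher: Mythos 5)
Your proposal is correct and matches the paper's own argument: the corollary is derived there exactly as in your first two paragraphs, by reading \eqref{eq:Uf_all} as $\mathcal{U}A=I$ and using that a left inverse of a linear map on a finite-dimensional space is also a right inverse. Your alternative first-step-conditioning verification is also valid, and your care about interpreting $P\delta_{i_0}$ as $P$ composed with $f\mapsto f(i_0)\delta_{i_0}$ is exactly the reading the paper intends.
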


Since  $\pi$ is orthogonal to the image of $I-P$, applying $\pi$ to both sides of \eqref{eq:inverse} results in  
\begin{align*}
\pi (f) &=\pi\Big(P\delta_{i_0} U_f(\cdot,i_0)\Big)\\
&= \pi(i_0) U_f (i_0,i_0).
\end{align*}
Setting $f={\bf 1}$, we get 
$$ 1= \pi({\bf 1}) = \pi(i_0) U_{\bf 1}(i_0,i_0)$$
which leads to the following representation for $\pi$: 
\begin{cor}\label{cor:stat}
\leavevmode
\begin{enumerate}
    \item $\displaystyle \pi(i_0) = \frac{1}{U_{\bf 1}(i_0,i_0)} = \frac{1}{\mathbb{E}_{i_0}[T_{i_0}]}$. 
    \item $\displaystyle \pi(f) = \frac{U_f (i_0,i_0)}{U_{\bf 1}(i_0,i_0)}= \mathbb{E}_{i_0} \left[\displaystyle \sum_{k=0}^{T_{i_0}-1} f(X_k)\right]\Bigg/\mathbb{E}_{i_0}[T_{i_0}]$. 
\end{enumerate}
\end{cor}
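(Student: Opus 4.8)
The plan is to obtain both parts as specializations of the single identity $\pi(f) = \pi(i_0)\,U_f(i_0,i_0)$, which in turn follows from Corollary~\ref{cor:uh_inverse} by pairing with the stationary distribution $\pi$.

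First I would take the operator identity \eqref{eq:inverse}, $f = \big((I-P) + P\delta_{i_0}\big)U_f(\cdot,i_0)$, and apply $\pi$ to both sides. Since $\pi$ is stationary we have $\pi(I-P) = 0$ as a row vector, so the $(I-P)$-term drops out and $\pi(f) = \pi\big((P\delta_{i_0})U_f(\cdot,i_0)\big)$. The point to notice is that $(P\delta_{i_0})$ acts as a rank-one operator: for any $u$, $(P\delta_{i_0})u = u(i_0)\,(P\delta_{i_0})$, where on the right $P\delta_{i_0}$ is the column vector with $i$-th entry $P(i,i_0)$. Therefore $\pi\big((P\delta_{i_0})u\big) = u(i_0)\sum_i \pi(i)P(i,i_0) = u(i_0)\,(\pi P)(i_0) = u(i_0)\,\pi(i_0)$, using $\pi P = \pi$ once more. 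Taking $u = U_f(\cdot,i_0)$ gives $\pi(f) = \pi(i_0)\,U_f(i_0,i_0)$ for every $f$.

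Next I specialize. Choosing $f = {\bf 1}$ and using $\pi({\bf 1})=1$ gives $1 = \pi(i_0)\,U_{\bf 1}(i_0,i_0)$, hence $\pi(i_0) = 1/U_{\bf 1}(i_0,i_0)$; and directly from the definition \eqref{U_f}, $U_{\bf 1}(i_0,i_0) = \mathbb{E}_{i_0}\big[\sum_{k=0}^{T_{i_0}-1}1\big] = \mathbb{E}_{i_0}[T_{i_0}]$, which is part (1). For part (2), dividing the master identity by $1 = \pi(i_0)\,U_{\bf 1}(i_0,i_0)$ gives $\pi(f) = U_f(i_0,i_0)/U_{\bf 1}(i_0,i_0)$, and unwinding \eqref{U_f} in numerator and denominator produces the stated ratio of expectations. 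I would also note that $\pi(i_0) > 0$ (established in Section~\ref{seC:potential}), equivalently $\mathbb{E}_{i_0}[T_{i_0}] < \infty$ by \eqref{eq:finite_time}, so every division above is legitimate.

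I do not expect a genuine obstacle here: given Corollary~\ref{cor:uh_inverse}, the argument is just the linear-algebraic fact $\pi(I-P) = 0$ together with $\pi P = \pi$ and the definition of the potential. The only step calling for a little care is recognizing $(I-P) + P\delta_{i_0}$ as acting by $u \mapsto (I-P)u + u(i_0)\,P\delta_{i_0}$, so that $\pi$ passes through the second summand to leave exactly $\pi(i_0)\,U_f(i_0,i_0)$.
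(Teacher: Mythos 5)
Your argument is correct and is essentially the paper's own: the text preceding Corollary~\ref{cor:stat} applies $\pi$ to \eqref{eq:inverse}, uses that $\pi$ annihilates the image of $I-P$ to get $\pi(f)=\pi(i_0)U_f(i_0,i_0)$, and then specializes to $f={\bf 1}$ exactly as you do. Your explicit identification of $P\delta_{i_0}$ as the rank-one map $u\mapsto u(i_0)\,P\delta_{i_0}$ and the remark that $\pi(i_0)>0$ justifies the divisions are welcome clarifications, but not a different proof.
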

Note that the choice of $i_0$ is arbitrary, and therefore for any $i$, 
$$ \pi(i)= \frac{1}{\mathbb{E}_{i} [T_i]}.$$ 

We now  study the expectation of the square of $I_{T_{i_0}}(f)$. This is the heart of the variance from Theorem \ref{th:CLT}. For any $i$, define 
\begin{equation}
U_f^2 (i,i_0) = \mathbb{E}_i\left[\left(I_{T_{i_0}}(f)\right)^2\right] = \mathbb{E}_i \left [ \left(\sum_{k=0}^{T_{i_0}-1} f(X_k)\right)^2\right].
\end{equation}
For a function $Z$ of the process ${\bf X}$ we define 
\begin{equation*}
    \sigma_i (Z) = \mathbb{E}_i \left[\left(Z - \mathbb{E}_{i}[Z]\right)^2\right],
\end{equation*}
the variance of $Z$ under $P_i$. In particular, 
\begin{equation}\label{eq:sigma_I}
    \sigma^2_{i_0}\left(I_{T_{i_0}}(f)\right) = U_f^2(i_0,i_0) - \Big(U_f(i_0,i_0)\Big)^2.
\end{equation}

The following lemma expresses $U_f^2$ as the potential of a different function.
\begin{lem}\label{lem:U2f}
For any $i$ we have
\begin{align*} 
  U_{f}^2 (i,i_0) & = U_{2f U_f (\cdot,i_0)-f^2}(i,i_0).
\end{align*}
\end{lem}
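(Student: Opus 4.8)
The plan is to expand $\big(I_{T_{i_0}}(f)\big)^2$ into a diagonal part and an off-diagonal part and to recognize each as a potential. Abbreviate $T=T_{i_0}$ and let $\theta_k$ denote the time-shift by $k$ units, so that $I_T(f)\circ\theta_k=\sum_{m=0}^{T\circ\theta_k-1}f(X_{k+m})$. Then
\[
\big(I_T(f)\big)^2=\sum_{k=0}^{T-1}f(X_k)^2+2\sum_{k=0}^{T-1}f(X_k)\sum_{l=k+1}^{T-1}f(X_l).
\]
The first sum equals $I_T(f^2)$, so taking $\mathbb{E}_i$ gives $U_{f^2}(i,i_0)$ directly from the definition of the potential. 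The content of the lemma is in evaluating the expectation of the second sum.

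For the off-diagonal term, the key observation is that on $\{T>k\}$ one has $X_1,\dots,X_k\neq i_0$, whence $T=k+T\circ\theta_k$ and therefore $\sum_{l=k+1}^{T-1}f(X_l)=I_T(f)\circ\theta_k-f(X_k)$ on that event (the case $k=0$ being immediate). Since the event $\{T>k\}$ depends only on $X_0,\dots,X_k$, applying the Markov property at the deterministic time $k$ and using $\mathbb{E}_j[I_T(f)]=U_f(j,i_0)$ yields
\[
\mathbb{E}_i\!\left[f(X_k)\,I_T(f)\circ\theta_k,\ T>k\right]=\mathbb{E}_i\!\left[f(X_k)\,U_f(X_k,i_0),\ T>k\right].
\]
Subtracting the $f(X_k)^2$ term, summing over $k\ge 0$ with the indicator ${\bf 1}_{\{T>k\}}$, and invoking \eqref{U_f with indicator} together with the linearity of $h\mapsto U_h(\cdot,i_0)$ from Lemma \ref{lem:U_map}, the off-diagonal contribution becomes $U_{2fU_f(\cdot,i_0)-2f^2}(i,i_0)$. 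Adding this to $U_{f^2}(i,i_0)$ and using linearity once more gives $U_{2fU_f(\cdot,i_0)-f^2}(i,i_0)$, as claimed.

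The steps that need (routine) care are the interchanges of summation and expectation and the application of the Markov property with a random upper limit. The former is justified because $f$ is bounded on the finite state space and $T_{i_0}$ has finite exponential moments near $0$ by \eqref{eq:finite_time}, so $\mathbb{E}_i\big[(\sum_{k=0}^{T-1}|f(X_k)|)^2\big]<\infty$ and Fubini--Tonelli applies throughout. I expect the only genuine obstacle --- really bookkeeping rather than difficulty --- to be pinning down the shift identities $T=k+T\circ\theta_k$ and $\sum_{l=k+1}^{T-1}f(X_l)=I_T(f)\circ\theta_k-f(X_k)$ on $\{T>k\}$, and confirming that $\{T>k\}$ is measurable with respect to $X_0,\dots,X_k$ so that the ordinary Markov property (no strong Markov property is needed, since $k$ is deterministic) can be applied.
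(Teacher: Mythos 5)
Your proposal is correct and follows essentially the same route as the paper: expand the square, peel off the diagonal term $U_{f^2}(i,i_0)$, and apply the Markov property at the deterministic time $k$ to replace the future partial sum by $U_f(X_k,i_0)$, yielding $U_{f U_f(\cdot,i_0)}(i,i_0)$. The only difference is bookkeeping --- you sum over the strict off-diagonal $l>k$ and subtract $2f^2$, while the paper symmetrizes over $k'\ge k$ and subtracts $f^2$ once; both give $U_{2fU_f(\cdot,i_0)-f^2}(i,i_0)$.
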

\begin{proof}
Let's start by rewriting: 
\begin{align} 
\nonumber 
U_{f}^2(i, i_0)
&= \mathbb{E}_{i}\left[\left(\sum_{k=0}^{T_{i_0}-1}f(X_k)\right)^2\right] \\
\nonumber
&= \mathbb{E}_{i}\left[\left(\sum_{k=0}^{\infty}{\bf 1}_{\{T_{i_0}>k\}}f(X_k)\right)^2\right] \\
\nonumber
&= \mathbb{E}_{i}\left[\sum_{k=0}^{\infty}\sum_{k'=0}^{\infty}{\bf 1}_{\{T_{i_0}>k\}}{\bf 1}_{\{T_{i_0}>k'\}}f(X_k)f(X_{k'})\right] \\
\label{eq:start_square}
& = 2 \mathbb{E}_{i}\left[\sum_{k=0}^{\infty}\sum_{k'=k}^{\infty}{\bf 1}_{\{T_{i_0}>k\}}{\bf 1}_{\{T_{i_0}>k'\}}f(X_k)f(X_{k'})\right] -U_{f^2}(i,i_0). 
\end{align} 
Now use the Markov property \eqref{eq:markov_property} to write 
\begin{align*}  
\mathbb{E}_{i}\left[\sum_{k=0}^{\infty}\sum_{k'=k}^{\infty}{\bf 1}_{\{T_{i_0}>k\}}{\bf 1}_{\{T_{i_0}>k'\}}f(X_k)f(X_{k'})\right] &= \sum_{k=0}^\infty \mathbb{E}_i \left[{\bf 1}_{\{T_{i_0}>k\}}f(X_k)\sum_{j=0}^\infty \mathbb{E}_{X_k}\left[ {\bf 1}_{\{T_{i_0}>j\}}f(X_j)\right]\right]\\
& =\sum_{k=0}^\infty \mathbb{E}_i \left[{\bf 1}_{\{T_{i_0}>k\}}f(X_k)U_f (X_k,i_0)\right]\\
&= U_{f U_f (\cdot,i_0)}(i,i_0).
\end{align*}
Plugging this into the right-hand side of \eqref{eq:start_square} completes the proof. 
\end{proof}
We can finally conclude with the result we were after: 
\begin{prop}
\label{prop:finally} 
Suppose $\pi(g)=0$. Then for all $i$
$$U_g^2 (i,i_0) =U_{(U_g)^2(\cdot,i_0) - (PU_g)^2(\cdot,i_0)} (i,i_0).$$ 
\end{prop}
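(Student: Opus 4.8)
The plan is to apply Lemma~\ref{lem:U2f} with $f = g$ and then use the hypothesis $\pi(g) = 0$ to simplify the function sitting inside the potential on the right-hand side. Write $w := U_g(\cdot, i_0)$ for brevity. Since $\pi(g) = 0$, Proposition~\ref{cor:unique_sol} says that $g$ lies in the image of $I - P$, so the last assertion of Lemma~\ref{lem:U_map} applies (with the role of $h$ played by $g$) and gives $w(i_0) = U_g(i_0, i_0) = 0$ together with $(I - P)w = g$. In particular $g = w - Pw$ as an identity of functions on $\{1,\dots,N\}$, and this exact relation is the whole point of assuming $\pi(g)=0$.

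Next I would invoke Lemma~\ref{lem:U2f} with $f = g$, which gives
\[
U_g^2(i, i_0) = U_{\,2gw - g^2}(i, i_0).
\]
It then remains only to identify the function $2gw - g^2$. Using $g = w - Pw$ and the elementary factoring $2w - g = 2w - (w - Pw) = w + Pw$,
\[
2gw - g^2 = g\,(2w - g) = (w - Pw)(w + Pw) = w^2 - (Pw)^2,
\]
where all products and squares are taken pointwise. Since $w = U_g(\cdot, i_0)$ and $Pw = P U_g(\cdot,i_0)$, the right-hand side is exactly $(U_g)^2(\cdot, i_0) - (PU_g)^2(\cdot, i_0)$; substituting this back into the displayed identity yields the claim.

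The argument is short, and there is no serious obstacle: the only thing to notice is that $\pi(g)=0$ is precisely what forces $w(i_0)=0$ and hence the exact identity $g = w - Pw$, after which the difference-of-squares collapse of $2gw - g^2$ is immediate. Without the hypothesis one would only get from Corollary~\ref{cor:uh_inverse} that $g = (I-P)w + P\delta_{i_0}w$, carrying a leftover term $i \mapsto P(i,i_0)\,w(i_0)$ that would spoil the factoring. I do not anticipate any difficulty with interchanging sums or expectations here, since all of that work has already been absorbed into Lemmas~\ref{lem:U_map} and \ref{lem:U2f}.
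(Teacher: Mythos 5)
Your proof is correct and follows essentially the same route as the paper: both reduce to the identity $g = U_g - PU_g$ (you via Proposition~\ref{cor:unique_sol} and the last assertion of Lemma~\ref{lem:U_map}, the paper via Corollary~\ref{cor:stat} and Corollary~\ref{cor:uh_inverse}, which amount to the same thing) and then substitute into Lemma~\ref{lem:U2f} and factor the difference of squares. No gaps.
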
 
\begin{proof}
Below we freeze $i_0$ and treat all functions as of $i$ only. Since $\pi(g)=0$, Corollary \ref{cor:stat} gives  $U_g (i_0,i_0)=0$. With this,  Corollary \ref{cor:uh_inverse} gives 
$g= U_g -PU_g$. Plugging this into Lemma \ref{lem:U2f} we obtain 
$$ U_g^2 =U_{2 (U_g-PU_g ) U_g - (U_g - P U_g)^2} = U_{(U_g)^2 - (PU_g)^2},$$
and the result follows.
\end{proof}
If $\pi(g)=0$, then \eqref{eq:sigma_I}, Corollary \ref{cor:stat}, and Proposition \ref{prop:finally} imply
$$\sigma^2_{i_0}\left(I_{T_{i_0}}(g)\right) = \frac{\pi\left((U_g)^2(\cdot ,i_0) - (PU_g)^2 (\cdot,i_0)\right)}{\pi(i_0)}.$$ We wish to simplify the expression appearing in the numerator on the right-hand side. Let $u= U_g (\cdot,i_0)+c {\bf 1}$ for some constant $c$. Then 
$$ \pi\left( u^2 - (Pu)^2\right) = \pi \left((U_g)^2(\cdot,i_0)-(PU_g)^2(\cdot,i_0) +2c (I-P) U_{g}(\cdot,i_0)\right).$$ 
Since $\pi(g)=0$, Lemma \ref{lem:U_map} implies $(I-P)U_{g}(\cdot,i_0)=g(\cdot)$, hence for any $c$ we have
$$\pi\left( u^2 - (Pu)^2\right)=\pi\left((U_g)^2(\cdot,i_0)-(PU_g)^2(\cdot,i_0)\right).$$ 
Choosing $c=-\pi(U_{g}(\cdot,i_0))$, we find that $u$ is the unique solution to \eqref{eq:unique_sol}. Therefore, 
\begin{cor}
\label{cor:variance} 
Suppose $\pi(g)=0$ and let $u$ satisfy $(I-P)u=g$ and $\pi(u)=0$. Then 
$$\displaystyle \sigma^2_{i_0}\left(I_{T_{i_0}}(g)\right)=\frac{\pi\left( {u}^2 - (Pu)^2\right)}{\pi(i_0)}.$$
\end{cor}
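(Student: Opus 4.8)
The plan is to chain together three facts already in hand: the variance decomposition \eqref{eq:sigma_I}, the stationary-measure representation of Corollary \ref{cor:stat}, and the potential identity of Proposition \ref{prop:finally}. Since $\pi(g)=0$, Corollary \ref{cor:stat} gives $U_g(i_0,i_0)=0$, so \eqref{eq:sigma_I} reads $\sigma^2_{i_0}\big(I_{T_{i_0}}(g)\big)=U_g^2(i_0,i_0)$. Proposition \ref{prop:finally} rewrites the right-hand side as $U_h(i_0,i_0)$ with $h=(U_g)^2(\cdot,i_0)-(PU_g)^2(\cdot,i_0)$, and applying Corollary \ref{cor:stat} again (its two parts together give $U_h(i_0,i_0)=\pi(h)/\pi(i_0)$) turns this into $\pi(h)/\pi(i_0)$. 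This already yields the claimed formula, but with $U_g(\cdot,i_0)$ in place of $u$.

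The second step is to show that the numerator $\pi\big(w^2-(Pw)^2\big)$ is insensitive to adding a constant to $w$. Writing $u=U_g(\cdot,i_0)+c{\bf 1}$ and using $P{\bf 1}={\bf 1}$, one expands $u^2-(Pu)^2$: the $c^2$ contributions cancel and the terms linear in $c$ assemble into $2c\,(I-P)U_g(\cdot,i_0)$. By Lemma \ref{lem:U_map}, $\pi(g)=0$ forces $(I-P)U_g(\cdot,i_0)=g$, so after applying $\pi$ the extra term is $2c\,\pi(g)=0$. Hence $\pi\big(u^2-(Pu)^2\big)=\pi\big((U_g)^2(\cdot,i_0)-(PU_g)^2(\cdot,i_0)\big)$ for every constant $c$.

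Finally, choose the constant that singles out the normalized solution: with $c=-\pi\big(U_g(\cdot,i_0)\big)$ we get $\pi(u)=0$, while $(I-P)u=(I-P)U_g(\cdot,i_0)=g$ still holds, so by Proposition \ref{cor:unique_sol} this $u$ is the unique solution to \eqref{eq:unique_sol}. Substituting this $u$ into the identity from the previous step gives the stated result. I do not anticipate a genuine obstacle: the one place to be careful is the algebraic expansion of $u^2-(Pu)^2$, to confirm that the quadratic-in-$c$ terms really cancel and that what remains is exactly $2c(I-P)U_g(\cdot,i_0)$ before the $\pi$-average is taken; everything else is a direct citation of the preceding lemmas.
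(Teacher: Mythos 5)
Your proposal is correct and follows essentially the same route as the paper: combine \eqref{eq:sigma_I}, Corollary \ref{cor:stat}, and Proposition \ref{prop:finally} to get the formula with $U_g(\cdot,i_0)$ in the numerator, then observe that $\pi\left(w^2-(Pw)^2\right)$ is unchanged under $w\mapsto w+c{\bf 1}$ because the $c^2$ terms cancel and $\pi\left(2c(I-P)U_g(\cdot,i_0)\right)=2c\,\pi(g)=0$, and finally pick $c=-\pi\left(U_g(\cdot,i_0)\right)$ to land on the normalized solution of \eqref{eq:unique_sol}. All the cited ingredients are used exactly as in the text, so there is nothing to add.
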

\subsubsection{IID structure}
\label{sec:IID}
In this section we fix some $i_0$ in the state space. In what follows, we will suppress the dependence on $i_0$, and write $T$ for $T_{i_0}$. Define 
$$\begin{cases} T^0 = 0 \\ 
T^{m+1} = \inf\{n > T^m : X_n= i_0\}\end{cases}$$ 
Note that $T^1=T$. Since by \eqref{eq:finite_time} $\max_i \mathbb{E}_i[T_{i_0}] <\infty$,  the random variables $T^1,T^2,\dots$ are finite with probability $1$. For $m=0,1,\dots$, define 
\begin{equation}
\label{eq:Zn}
Z_m = \sum_{k=T^m}^{T^{m+1}-1}f(X_k).
\end{equation}
By implementing the Markov property \eqref{eq:markov_property}, it follows that the  RVs in the  sequence $(Z_m:m\in\Z_+)$ are independent with $Z_1,Z_2,\dots$ IID. By employing the LLN \cite[Chapter 6, Theorem 6.1]{gut_book} and CLT \cite[Chapter 7, Theorem 1.1]{gut_book} for IID sequences, we obtain 
\begin{align}  
\label{eq:prem_LLN}
& \frac{I_{T^m}(f)}{m} \to \mathbb{E}_{i_0} \left[I_{T_{i_0}}(f)\right]\mbox{, a.s.}\\
\label{eq:prem_CLT}
& \frac{I_{T^m}(f) - m\mathbb{E}_{i_0}\left[I_{T_{i_0}}(f)\right]}{\sqrt{m}} \Rightarrow N\left(0,\sigma^2_{i_0}\left(I_{T_{i_0}}(f)\right)\right), 
\end{align}
where $\sigma^2_{i_0}(I_{T_{i_0}}(f))$ is given by \eqref{eq:sigma_I}.
Now, by its very definition and Corollary \ref{cor:stat}, we have 
\begin{equation}\label{eq:pi_f}
\mathbb{E}_{i_0}\left[I_{T_{i_0}}(f)\right]=U_f(i_0,i_0)=\pi(f){\mathbb{E}_{i_0} [T_{i_0}]}.
\end{equation}
Choosing  $f\equiv {\bf 1}$ in \eqref{eq:prem_LLN} and \eqref{eq:pi_f} gives 
$$\frac{T^m}{m} = \frac{I_{T^m} ({\bf 1})}{m} \to  \mathbb{E}_{i_0} \left[T_{i_0}\right], a.s.$$
or $  T^m\sim  m \mathbb{E}_{i_0} [T_{i_0}]$ a.s.  Plugging this into \eqref{eq:prem_LLN} and \eqref{eq:prem_CLT}
gives 
\begin{align*}  
&\frac{I_{T^m}(f)}{T^m} = \frac{I_{T^m}(f)}{m} \times \frac{m}{T^m} \to \pi(f)\mbox{ a.s.}\\
& \frac{I_{T^m}(f) - m\mathbb{E}_{i_0}\left[I_{T_{i_0}}(f)\right]}{\sqrt{T^m}} = \frac{I_{T^m}(f) - m\mathbb{E}_{i_0}\left[I_{T_{i_0}}(f)\right]}{\sqrt{m}} \times \sqrt\frac{m}{T^m}  \Rightarrow  N\left(0,\frac{\sigma^2_{i_0}\left(I_{T_{i_0}}(f)\right)}{\mathbb{E}_{i_0}\left[T_{i_0}\right]}\right).
\end{align*}
Changing from $f$ to $\bar f = f - \pi(f)$, and recalling that  $0=\pi(\bar f)=\pi(i_0)\mathbb{E}_{i_0}[I_{T_{i_0}}(\bar f)]$ and $\mathbb{E}_{i_0}[T_{i_0}]= \frac{1}{\pi(i_0)}$, we proved
\begin{align}
\label{eq:prem_LLN2}
&\frac{I_{T^m}(\bar f)}{T^m} \to 0,\mbox{ a.s.}\\
\label{eq:prem_CLT2}
&\frac{I_{T^m}(\bar f)}{\sqrt{T^m}} \Rightarrow N\left(0,\pi(i_0)\sigma^2_{i_0}\left(I_{T_{i_0}}(\bar f)\right)\right).
\end{align}
In addition,  \begin{equation}
\label{eq:asymptotic_variance} 
\mathbb{E}_{i_0}\left[I_{T^m}(\bar f)^2 \right]=\mathbb{E}_{i_0}\left[T^m\right] \pi(i_0) \sigma^2_{i_0}\left(I_{T_{i_0}}(\bar f)\right).
\end{equation} 
By Corollary \ref{cor:variance}, 
\begin{equation} 
\label{eq:final_variance} 
\pi(i_0)\sigma^2_{i_0}\left(I_{T_{i_0}}(\bar f)\right)=\pi\left(({\bar U}_{\bar f})^2- (P{\bar U}_{\bar f})^2\right).
\end{equation} 
where $\bar U_{\bar f}$ is the unique solution to \eqref{eq:unique_sol} with $g= \bar f$. This expression is equal to $\sigma^2_f$ in the statement of Theorem \ref{th:CLT}. 
\subsubsection{Final step: passing to subsequences} 
\label{sec:subseq}
For each $n\in\N$, define 
$$\delta_n =\inf \{j\geq 1:X_{n+j}=i_0\}$$
and
$$V_n=\big|\{1\leq j\leq n:X_j=i_0\}\big|.$$
That is, $\delta_n$ is the time between $n$ and the first hitting of $i_0$ after time $n$, while $V_n$ is the total number of visits to $i_0$ up to and including time $n$ but not including time $0$. It is clear that for each $n$ we have $n+ \delta_n=T^{V_n+1}$.  Define $\Delta_n  = I_{n}(\bar f) - I_{T^{V_n+1}}(\bar f)$. Then 
$$|\Delta_n| \le \delta_n \|\bar f\|_\infty.$$
Next we can write
\begin{align}\label{eq:delta_split}
\frac{I_n (\bar f)}{\sqrt{n}}&= \frac{I_{T^{V_n+1}}(\bar f)+\Delta_n}{\sqrt{n}}\nonumber\\
&=\frac{I_{T^{V_n+1}}(\bar f)}{\sqrt{T^{V_n+1}}}\times \underbrace{\sqrt\frac{n+ \delta_n}{n}}_{\displaystyle\delta_{1,n}} + \underbrace{\frac{\Delta_n}{\sqrt{n}}}_{\displaystyle\delta_{2,n}}.
\end{align}
Observe that for any $i$ and $\epsilon>0$, Chebyshev's inequality gives
$$\mathbb{P}_i\left(|\Delta_n| > \epsilon \sqrt{n}\right) \le \mathbb{P}_i\left( \|\bar f\|_\infty \delta_n > \epsilon \sqrt{n}\right) \le \|\bar f\|_\infty \frac{\max_i \mathbb{E}_{i} [T_{i_0}]}{\epsilon \sqrt{n}}$$
so it follows that $\Delta_n /\sqrt{n} \to 0$ in probability. Similarly, $\delta_n/n \to 0$ in probability as well. Therefore $\delta_{2,n}\to 0$ in probability and $\delta_{1,n}\to 1$ in probability. Clearly,  \eqref{eq:finite_time} implies $\mathbb{P}_i(T_{i_0}<\infty)=1$ for any $i$. Therefore ${\bf X}$ will hit $i_0$ infinitely many times and it follows that $V_n\to\infty$ with probability $1$. Hence we can use \eqref{eq:prem_CLT2} and \eqref{eq:final_variance} in conjunction with \eqref{eq:delta_split} to finally obtain 
\begin{equation} 
\frac{I_n (\bar f)}{\sqrt{n}}\Rightarrow N(0,\sigma^2_f).
\end{equation} 

The proof that $I_n({\bar f})/n \to 0$, a.s. is similar yet simpler, and a similar argument allows us to pass from \eqref{eq:asymptotic_variance} to 
\begin{equation}
\label{eq:asymptotic_variance_final} 
\mathbb{E}_i[I_{n}(\bar f)^2 ]\sim n  \sigma^2_f.
\end{equation} 

\section{Application: Reversible case}
We say that $P$ is {\it reversible} if there exists a row vector $\mu$, not identically zero, such that the {\it detailed balance condition} holds: 
\begin{equation} 
\label{eq:DBC} \mu(i)P(i,j) = \mu(j)P(j,i).
\end{equation} 
By summing over $j$, this gives $\mu=\mu P$. As shown in Section \ref{seC:potential}, this implies $\mu$ is a constant multiple of $\pi$. When  $P$ is reversible,  the bilinear form on column vectors  $(\cdot,\cdot)_{\pi}$ given by 
\begin{equation} 
\label{eq:inner_prod} 
(u,v)_{\pi} =\sum_{i}\pi(i) u(i) v(i)
\end{equation} 
is an inner product. By construction, $P$ is symmetric with respect to this inner product. Let $\|\cdot\|_\pi= \sqrt{(u,u)_{\pi}}$. Then $\|\cdot\|_{\pi}$ is a Euclidean norm. It follows that there exists an orthonormal basis of eigenvectors for $P$. That is a sequence $\phi_1,\dots,\phi_N$ of functions (column vectors) with the properties :
\begin{enumerate} 
\item Every function from $\{1,\dots,N\}$ to $\R$ is a linear combination of $\phi_1,\dots,\phi_N$ (spanning),
\item $(\phi_s,\phi_{s'})_\pi=0$ when $s\ne s'$ (orthogonal),
\item $\|\phi_s\|_\pi=1$ for all $s$ (normalized). 
\end{enumerate} 
Denote the eigenvalue for $\phi_s$ by $\lambda_s$.
Irreducibility implies that $1$ is an eigenvalue for $P$ with eigenspace spanned by ${\bf 1}$, we will choose $\phi_1={\bf 1}$, and $\lambda_1=1$. 
Suppose now that $f = \sum_{s} c_s \phi_s $. Then $\pi(f) = c_1$, and therefore $\bar f = f-\pi(f)=\sum_{s\ne 1} c_s \phi_s$. Next let $u=\sum_{s\neq 1} \alpha_s \phi_s$ so $\pi(u)=0$. Now  $(I-P) u= \sum_{s\ne 1} \alpha_s (1-\lambda_s) \phi_s$. Therefore the unique solution to \eqref{eq:unique_sol} is given by $u$ with $\alpha_s = \frac{c_s}{1-\lambda_s}$. In addition, $Pu = \sum_{s\ne 1} \lambda_s \alpha_s \phi_s$. Due to the fact that the eigenfunctions are orthonormal, it follows that 
\begin{equation} 
\label{eq:twonorms} 
\begin{split}
 \|u\|_{\pi}^2 = \sum_{s=2}^N \frac{c_s^2}{(1-\lambda_s)^2}\\
\|P u\|_{\pi}^2  = \sum_{s=2}^N \frac{\lambda_s^2 c_s^2}{(1-\lambda_s)^2}.   
\end{split}
\end{equation}
Concluding, we have
\begin{cor}
\label{cor:reversible}
Suppose that $P$ is irreducible and reversible, that is \eqref{eq:irreducible} and \eqref{eq:DBC} hold. Let $\pi$ be its stationary distribution and let $\phi_1={\bf 1},\dots,\phi_N$ be an orthonormal basis for $\R^N$ equipped with the inner product \eqref{eq:inner_prod}, consisting of eigenfunctions for $P$. For $s=1,\dots,N$, let $\lambda_s$ be the eigenvalue for $\phi_s$. 

If  $f = \sum_{s=1}^N c_s \phi_s$, then the variance $\sigma^2_f$ in Theorem \ref{th:CLT} is equal to 
$$\sigma^2_f = \sum_{s=2}^N\frac{1+\lambda_s}{1-\lambda_s}  c_s^2.$$ 
\end{cor}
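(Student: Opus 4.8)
The plan is to substitute the eigen-expansion of $f$ into the variance formula already obtained in Lemma \ref{lem:var_lemma}, exploiting that the quadratic form $w\mapsto\pi(w^2)$ coincides with the squared norm $\|w\|_\pi^2$ associated to the inner product \eqref{eq:inner_prod}. By part (1) of Lemma \ref{lem:var_lemma} we have $\sigma^2_f=\pi(u^2-(Pu)^2)=\|u\|_\pi^2-\|Pu\|_\pi^2$ for \emph{any} column vector $u$ with $(I-P)u=\bar f$, so the task reduces to exhibiting one such $u$ in the eigenbasis and reading off the two norms from orthonormality.

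First I would collect the expansions set up in the paragraph preceding the corollary. Writing $f=\sum_s c_s\phi_s$ and using $\phi_1={\bf 1}$ together with orthonormality, $\pi(f)=(\phi_1,f)_\pi=c_1$, so $\bar f=\sum_{s\ne1}c_s\phi_s$ (in particular $\pi(\bar f)=0$, so $(I-P)u=\bar f$ is solvable). The vector $u=\sum_{s\ne1}\frac{c_s}{1-\lambda_s}\phi_s$ then satisfies $\pi(u)=0$, $(I-P)u=\sum_{s\ne1}c_s\phi_s=\bar f$, and $Pu=\sum_{s\ne1}\frac{\lambda_s c_s}{1-\lambda_s}\phi_s$; orthonormality gives the two identities in \eqref{eq:twonorms}. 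Subtracting them and applying $1-\lambda_s^2=(1-\lambda_s)(1+\lambda_s)$ to each summand turns $\frac{(1-\lambda_s^2)c_s^2}{(1-\lambda_s)^2}$ into $\frac{1+\lambda_s}{1-\lambda_s}c_s^2$, which is the claimed expression; this is the only real computation and it is a one-liner.

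The single point requiring attention is the well-definedness of the coefficients $\frac{c_s}{1-\lambda_s}$, i.e. that $\lambda_s\ne1$ for all $s\ge2$. This is precisely where irreducibility enters beyond what the reversibility discussion already invokes: as shown in Section \ref{seC:potential}, the eigenvalue $1$ of $P$ has one-dimensional eigenspace spanned by ${\bf 1}=\phi_1$, so no other basis eigenvector can carry eigenvalue $1$; the possibility $\lambda_s=-1$ is harmless since it contributes a zero term. I do not anticipate any genuine obstacle: the substantive ingredients — existence of a $\pi$-orthonormal eigenbasis and the representation $\sigma^2_f=\pi(u^2-(Pu)^2)$ — are exactly the reversibility setup and Corollary \ref{cor:variance}/Lemma \ref{lem:var_lemma}, and what remains is the bookkeeping of rewriting $\pi(w^2)$ as $\|w\|_\pi^2$.
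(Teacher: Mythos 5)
Your proposal is correct and follows essentially the same route as the paper: expand $f$ in the $\pi$-orthonormal eigenbasis, identify $\bar f=\sum_{s\ne1}c_s\phi_s$, solve $(I-P)u=\bar f$ termwise with coefficients $c_s/(1-\lambda_s)$, and subtract the two norms in \eqref{eq:twonorms} using $1-\lambda_s^2=(1-\lambda_s)(1+\lambda_s)$. Your explicit justification that $\lambda_s\ne 1$ for $s\ge 2$ (via the one-dimensionality of $\nul(I-P)$ from Section \ref{seC:potential}) is a point the paper leaves implicit, and is a welcome addition.
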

We comment that this is a very special and limited case of the well-known result of \cite{kipnis}.

Next we turn to a specific example. Suppose that $N=2$. Irreducibility is equivalent to $P(1,2)P(2,1)\ne 0$, which is also equivalent to being reversible. As a result, 
$$ \pi(1) = \frac{P(2,1)}{P(1,2)+P(2,1)}, \pi(2) = \frac{P(1,2)}{P(1,2)+P(2,1)}.$$ 
In addition, 
$$1+\lambda_2 =P(2,2)+P(1,1) = 1-P(1,2)+1-P(2,1)$$ 
therefore 
\begin{equation} 
\label{eq:lambda1} \lambda_2 = 1-\big(P(1,2)+P(2,1)\big),
\end{equation} 
Also, 
\begin{equation} 
\label{eq:theratio} \frac{1+\lambda_2}{1-\lambda_2}= \frac{2-P(1,2)-P(2,1)}{P(1,2)+P(2,1)}.
\end{equation} 
Now  $c_2^2 = \|\bar f\|_{\pi}^2$. We also have 
 $$\bar f (1) = f(1) - \frac{f(1)P(2,1)+f(2)P(1,2)}{P(1,2)+P(2,1)}= \big(f(1)-f(2)\big) \frac{P(1,2)}{P(1,2)+P(2,1)},$$ 
 and 
$$ \bar f (2) = -\big(f(1)- f(2)\big) \frac{P(2,1)}{P(1,2)+P(2,1)}.$$
Hence 
\begin{equation}\label{eq:normsquared}
\begin{split}
c_2^2= \|\bar f\|_{\pi}^2& = (f(1)-f(2))^2\frac{ P(1,2)^2 P(2,1)+P(2,1)^2 P(1,2) }{\big(P(1,2)+P(2,1)\big)^3}\\
&=(f(1)-f(2))^2 \frac{P(1,2)P(2,1)}{\big(P(1,2)+P(2,1)\big)^2}.
\end{split}
\end{equation} 
Therefore from \eqref{eq:theratio}  and \eqref{eq:normsquared} we conclude
\begin{cor}\label{cor:2_state}
Suppose that 
$$P=\left ( \begin{array}{cc} P(1,1) & P(1,2) \\ P(2,1) & P(2,2) \end{array}\right)$$ with $P(2,1)P(1,2)>0$. Then 
$$\sigma^2_f = (f(1)-f(2))^2 P(1,2)P(2,1)\frac{P(1,1)+P(2,2)}{\big(P(1,2)+P(2,1)\big)^3}.$$
\end{cor}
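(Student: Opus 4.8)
The plan is to specialize Corollary \ref{cor:reversible} to $N=2$ and substitute the two ingredients already assembled in the preceding discussion. First I would note that any irreducible $2\times 2$ stochastic matrix with $P(1,2)P(2,1)>0$ is automatically reversible: the detailed balance condition \eqref{eq:DBC} reduces to the single scalar equation $\mu(1)P(1,2)=\mu(2)P(2,1)$, which determines $\mu$ up to scaling and hence recovers $\pi$. Thus Corollary \ref{cor:reversible} applies, and since the state space has only two points, the sum over $s=2,\dots,N$ collapses to the single term $s=2$, giving $\sigma^2_f = \frac{1+\lambda_2}{1-\lambda_2}\,c_2^2$.

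Next I would bring in the two explicit formulas derived just above the statement. The ratio $\frac{1+\lambda_2}{1-\lambda_2}$ is \eqref{eq:theratio}, which follows from \eqref{eq:lambda1} together with the identity $\lambda_2 = P(1,1)+P(2,2)-1$ (the trace of $P$ minus one); this yields numerator $2-P(1,2)-P(2,1) = P(1,1)+P(2,2)$ and denominator $P(1,2)+P(2,1)$. The coefficient $c_2^2 = \|\bar f\|_\pi^2$ is \eqref{eq:normsquared}, obtained by writing $\bar f(1)$ and $\bar f(2)$ in terms of $f(1)-f(2)$ and the stationary weights, then evaluating $\pi(1)\bar f(1)^2+\pi(2)\bar f(2)^2$ and simplifying.

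Finally I would multiply the two factors and collect: the product of $\frac{P(1,1)+P(2,2)}{P(1,2)+P(2,1)}$ with $(f(1)-f(2))^2\,\frac{P(1,2)P(2,1)}{(P(1,2)+P(2,1))^2}$ is exactly the claimed expression. I do not expect any genuine obstacle here — the corollary is a bookkeeping consequence of Corollary \ref{cor:reversible} combined with \eqref{eq:theratio} and \eqref{eq:normsquared}. The only mild care needed is the algebraic simplification used in \eqref{eq:normsquared}, namely recognizing $P(1,2)^2P(2,1)+P(2,1)^2P(1,2) = P(1,2)P(2,1)\big(P(1,2)+P(2,1)\big)$, and the observation that $2-P(1,2)-P(2,1)$ equals the trace $P(1,1)+P(2,2)$, which is what rewrites \eqref{eq:theratio} into the form appearing in the statement.
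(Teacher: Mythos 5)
Your proposal is correct and follows essentially the same route as the paper: specialize Corollary \ref{cor:reversible} to $N=2$ (where irreducibility automatically gives reversibility), then substitute the ratio \eqref{eq:theratio} and the norm \eqref{eq:normsquared}, exactly as in the discussion preceding the corollary. The algebraic simplifications you flag are the same ones the paper performs, so there is nothing to add.
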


Specifying further, we can apply Corollary \ref{cor:2_state} to simple symmetric random walk on $\mathbb{Z}$ where 
$$P=\left ( \begin{array}{cc}1/2 & 1/2 \\ 1/2 & 1/2 \end{array}\right)$$
with $f(1)=-1$ and $f(2)=1$ to see that $\sigma^2_f=1$ as expected.

\section{Application: Finite-Memory Elephant Random Walk}
\label{sec:elephant} 
\subsection{Introduction}
The Elephant Random Walk (ERW) was introduced in \cite{Schutz04} as a model for ``non-markovian'' random walk due to ``long memory''. We will present two finite-memory versions of the Elephant Random Walk and prove the CLT for each one. We begin by introducing the original version of the ERW. 
Let $p\in [0,1]$, and suppose that we have a jar (with infinite capacity) of $\pm 1$'s starting with $N_0^+$ $+1$'s and $N_0^-$ $-1$'s in it.  We will assume $N_0^+ + N_0^-\in\N=\{1,2,\dots\}$. Set $X_0=N_0^+ - N_0^-$.

Continue inductively. Conditioned on $(N_0^+,\dots,N_n^+,N_0^-,\dots,N_n^-)$
\begin{enumerate} 
\item Sample $\pm 1$ from the jar independently from the past and replace it. 
\item Independently, with probability $p$  accept the sample.  Otherwise, ``flip it'', that is multiply it by $-1$.  The resulting sign is the  {\it new sign}, $s_{n+1}$. 
\item Set $X_{n+1} = X_n+ s_{n+1}$.
\item Add the new sign to the jar. 
\item Set $N_{n+1}^+,N_{n+1}^-$ the current number of $+1$'s and $-1$'s in the jar, respectively. 
\end{enumerate} 
The process ${\bf X}=(X_n:n\in\Z_+)$ is the Elephant Random Walk. The sign selection in item 1. features   ``long memory'', as the sign sampled can be viewed as sampling a past step, going all the back to the very beginning. When  $p=1/2$, the process is the simple symmetric RW on $\Z$ and for $p=1$, $((P_n,N_n):n\in\Z_+)$ is Polya's urn process.  The ERW exhibits an interesting set of features. When $p\ne \frac 12$ it is neither a finite-state Markov chain nor an additive functional of one. The dependence of the process on $p$ is nontrivial and even surprising with a phase transition at $p=\frac 34$, see \cite{gava}\cite{bercu}. 

A recent manuscript \cite{Gut18} studied versions of the model under an assumption of ``restricted memory'', sampling signs only from the very distant past (the first $k$ signs), or from the most recent past, where the cases of the last one and the last two signs were explicitly solved. Though convergence to a normal distribution is guaranteed, the calculation of the variance is another story. Quoting from the fifth paragraph of \cite{Gut18}:

\begin{displayquote}
\tt{The first task ... is to consider the cases when the walker only remembers the
first (two) step(s) or only the most recent (previous) step. In particular the latter case involves rather cumbersome computations and we therefore invite the reader(s) to try to push our results further.}
\end{displayquote}

In Section \ref{sec:ordered} we complete the treatment and provide a formula for the variance in the case of remembering the most recent $L$ steps. Our approach is based on Theorem \ref{th:CLT} and computation of the variance in part 2 of Lemma \ref{lem:var_lemma} through some algebraic manipulation and using exchangeability of the stationary distribution for the underlying Markov chain. This differs from the approach in \cite{Gut18}, which was based on moment calculations. 

In Section \ref{sec:disordered} we present another version of the ERW where samples are also from the last $L$ steps, and study it through Theorem \ref{th:CLT} with the variance obtained through part 1 of Lemma \ref{lem:var_lemma}. 

Let us explain the two versions we will be working on. As said, in both cases the number of ``past'' signs from which we sample is equal to some constant $L\in\N$.  To accomplish this, some signs need to be discarded and this where the two models differ. In Section  \ref{sec:disordered} we look at the case where the sign sampled is discarded and  replaced by the new sign. We call this model the ``finite disordered memory'' because no ordering of the history of the sampling is maintained. In Section \ref{sec:ordered} we keep the last $L$ ``new signs'' and therefore call it  ``finite ordered memory''. This latter version is more in-line with the ERW and extends the scope of \cite[Section 8,9]{Gut18} which studied the cases $L=1,2$. 

Like the original ERW, both of our finite memory versions reduce to simple symmetric RW on $\mathbb{Z}$ when $p=\frac{1}{2}$, hence their variances are both $1$ in this case. However, the expressions for the variances of the two models are fundamentally different:  In the disordered case, the variance is independent of $L$. In the ordered case, the variance depends on $L$ and exhibits a non-trivial limit as $L \to\infty$ which depends on $p$. 

\subsubsection{Finite disordered memory.}\label{sec:disordered}
\begin{figure}[h!]
\centering
\includegraphics[scale=0.6]{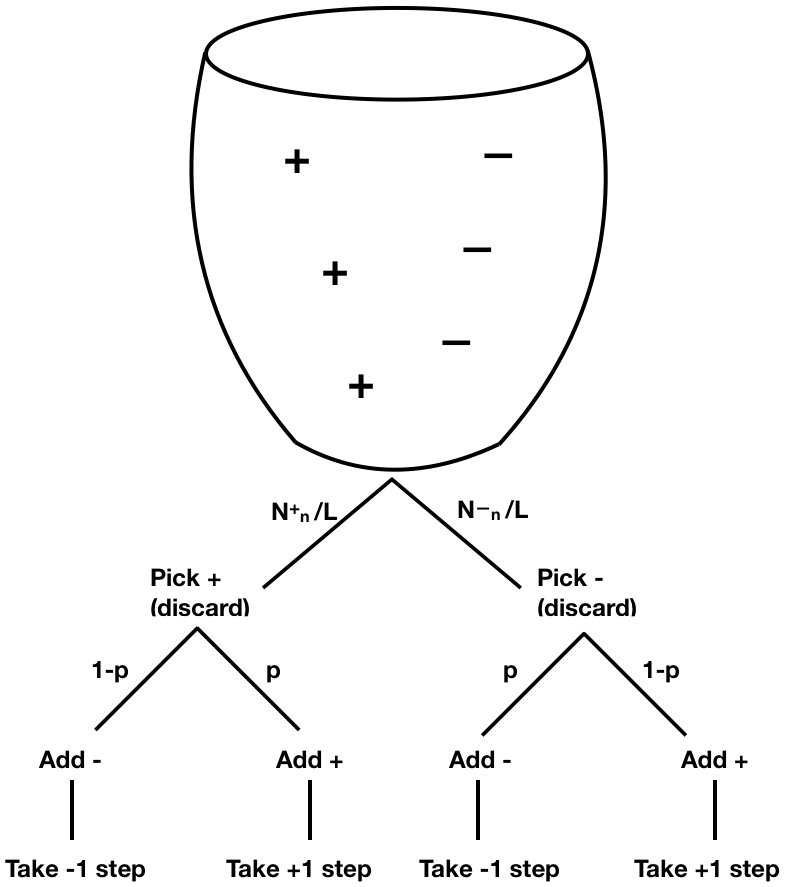}
\caption{ERW with finite disordered memory.}
\label{Urn}
\end{figure}
Start from $N_0^+ + N_0^-=L$, and $X_0$ arbitrary.  Assuming $(N_j^+,N_j^-,X_j),~j=0,\dots,n$ are defined, continue as follows: 
\begin{enumerate} 
\item Sample $\pm1$ from the jar, without replacement. 
\item With probability $p$ accept it. Otherwise, multiply by $-1$. The resulting sign in either case is {\it the new sign}, $s_{n+1}$.
\item Let $X_{n+1}=X_n+s_{n+1}$. 
\item Add the new sign to the jar. 
\end{enumerate} 
We call the resulting random walk ${\bf X}$ the ERW with finite disordered memory of length $L$ and acceptance probability $p$. 

The evolution of the signs in the jar is a Markov chain. However, it is easy to see that ${\bf X}$ is {\it not} an additive functional of the chain as it may increase by one or decrease by one while the chain will keep the same state. In order to remedy this we introduce an augmented process that will also be a Markov chain. The new process ${\bf Y}$ will  be on the state space $\Omega = \{(-1,j):j=0,\dots,L-1\}\cup\{(1,j):j=1,\dots,L\}$ and will have the following transition function 
 \begin{equation} 
 \label{eq:disordered} 
 \begin{split}  &P\big((i,j),(1,j)\big) = \frac{j}{L}p\\
 & P\big((i,j),(-1,j)\big) = \frac{L-j}{L}p. \\
  &P\big((i,j),(1,j+1)\big) = \frac{L-j}{L}(1-p)\\
  &P\big((i,j),(-1,j-1)\big) = \frac{j}{L}(1-p)
\end{split} 
\end{equation}
Observe that when $0\leq p<1$, the process $((s_n,N_n^+):n\in\Z_+)$ is an irreducible Markov chain with transition function $P$. Then ${\bf X}$ can be viewed as an additive functional of ${\bf Y}$, through the choice of the function $f(i,j) = i$.
\begin{thm}
\label{th:disordered}
The ERW with finite disordered memory of length $L$ and acceptance probability $0\leq p<1$ satisfies 
\begin{align*} \lim_{t\to\infty} \frac{X_t}{t}=0,\mbox{ a.s.}\\
\frac{X_t}{\sqrt{t}}\Rightarrow N\left(0,\frac{p}{1-p}\right).
\end{align*} 
\end{thm}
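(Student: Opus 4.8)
The plan is to realize $\mathbf{X}$ as an additive functional of the irreducible chain $\mathbf{Y}$ of \eqref{eq:disordered} and apply Theorem \ref{th:CLT}. Writing $Y_k=(s_k,N^+_k)$ and $f(i,j)=i$, we have $X_t=X_0-f(Y_0)+I_{t+1}(f)$, so $X_t/t$ and $X_t/\sqrt t$ share the almost-sure, resp.\ distributional, limits that Theorem \ref{th:CLT} supplies for $I_t(\bar f)/t$ and $I_t(\bar f)/\sqrt t$, provided $\pi(f)=0$ (which is also what forces the a.s.\ limit to be $0$ rather than a nonzero drift); the initial value $Y_0$ is immaterial. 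To get $\pi(f)=0$ I would use the involution $\Phi(i,j)=(-i,L-j)$, which maps $\Omega$ onto itself and, by inspection of \eqref{eq:disordered}, satisfies $P(\Phi\omega,\Phi\omega')=P(\omega,\omega')$; uniqueness of the stationary distribution then gives $\pi\circ\Phi=\pi$, hence $\pi(f)=\sum_{(i,j)}\pi(i,j)\,i=-\pi(f)=0$ and $\bar f=f$. It then remains only to identify $\sigma_f^2$.

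For $\sigma_f^2$ I would use part 1 of Lemma \ref{lem:var_lemma}: produce any $u$ with $(I-P)u=f$ and compute $\sigma_f^2=\pi\!\left(u^2-(Pu)^2\right)$. A direct computation from \eqref{eq:disordered} gives $Pf(i,j)=\frac{(2p-1)(2j-L)}{L}$, and for the function $h(i,j)=2j-L$ the same kind of computation gives $(I-P)h(i,j)=\frac{2(1-p)}{L}(2j-L)$, i.e.\ $h$ is an eigenfunction of $P$ (inherited from the eigenfunction $2N^+-L$ of the $N^+$-chain). Hence $u(i,j)=i+\frac{2p-1}{2(1-p)}(2j-L)$ solves $(I-P)u=f$, and this is the only place $p<1$ is needed. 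Since then $Pu=u-f$, we have $u^2-(Pu)^2=(u-Pu)(u+Pu)=f(2u-f)=2fu-f^2$, and as $f^2\equiv 1$ and $\pi(f)=0$ this collapses to $\sigma_f^2=2\pi(fu)-1=1+\tfrac{2(2p-1)}{1-p}\,m$, where $m:=\sum_{(i,j)}\pi(i,j)\,ij=\mathbb{E}_\pi[s\,N^+]$.

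Thus the whole theorem hinges on the single number $m=\mathbb{E}_\pi[s\,N^+]$, and I expect this to be the main obstacle, since it involves the joint stationary law and not merely the marginal of $N^+$. I would first note that $N^+$ is itself a Markov chain (holding with probability $p$, moving $j\mapsto j\pm1$ with rates $\tfrac{L-j}{L}(1-p)$ and $\tfrac{j}{L}(1-p)$) whose stationary distribution is $\mathrm{Binomial}(L,\tfrac12)$ by detailed balance. One tidy way to finish is to guess and verify, using \eqref{eq:disordered}, that $\pi(1,j)=2^{-L}\binom{L-1}{j-1}$ and $\pi(-1,j)=2^{-L}\binom{L-1}{j}$ (Pascal's rule recovers the binomial marginal), whereupon $m=2^{-L}\sum_j j\bigl(\binom{L-1}{j-1}-\binom{L-1}{j}\bigr)=\tfrac12$ by a routine binomial sum. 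An alternative that avoids guessing $\pi$ is to compute $\mathbb{E}\bigl[s_{n+1}N^+_{n+1}\mid Y_n=(i,j)\bigr]=\frac{(2p-1)j(2j-L)}{L}+(1-p)$ from \eqref{eq:disordered}, average against $\pi$, and use stationarity with $\mathbb{E}_\pi[N^+]=L/2$ and $\mathbb{E}_\pi[(N^+)^2]=L(L+1)/4$ to solve for $m=\tfrac12$. Substituting $m=\tfrac12$ yields $\sigma_f^2=1+\frac{2p-1}{1-p}=\frac{p}{1-p}$; as a sanity check this is $1$ at $p=\tfrac12$ and agrees with Corollary \ref{cor:2_state} when $L=1$.
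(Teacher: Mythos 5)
Your proposal is correct and follows essentially the same route as the paper: $\mathbf{X}$ realized as an additive functional of the augmented chain \eqref{eq:disordered}, Theorem \ref{th:CLT} for the limits, and part 1 of Lemma \ref{lem:var_lemma} with the same solution $u(i,j)=i+\frac{2p-1}{2(1-p)}(2j-L)$ of the Poisson equation (this is exactly the paper's $\bar U_f$ in \eqref{eq:Uf_LL}). The only minor differences are that you justify $\pi(f)=0$ by an explicit involution where the paper appeals to symmetry, and your second route to $\mathbb{E}_\pi[sN^+]=\tfrac12$ via one-step stationarity needs only the $\mathrm{Bin}(L,\tfrac12)$ marginal of $N^+$, whereas the paper (and your first route) uses the full joint stationary law \eqref{eq:pi_LL}.
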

\begin{rem}\label{rem:bounded_displacement}
When $p=0$, the result is immediate since in this case the disordered memory ERW has displacement bounded by $L$. This should be contrasted with what happens in the ordered memory ERW, see Remark \ref{rem:ordered_p0}.
\end{rem}
\begin{rem}
When $p=1$, the initial proportion of signs never changes so the disordered memory ERW is simply biased random walk.
\end{rem}

Before proving Theorem \ref{th:disordered}, we begin by deriving the stationary distribution for $p$. First consider the second marginal, the number of pluses, which is a Markov chain in its own right. The stationary distribution for this chain is binomial with parameters $L$ and $\frac 12$. To see why, think of the jar as being a list of length $L$ consisting of pluses and minuses. The evolution can be described as follows: pick a location uniformly among the $L$ locations, and with probability $1-p$ flip the sign there. Clearly, if the initial distribution of the signs was IID $\mbox{Ber}(\frac 12)$, it will remain so after one step. Therefore the stationary distribution for the number of pluses is $\mbox{Bin}(L,\frac 12)$. Now to find $\pi$, let's argue heuristically through the ergodic theorem: the proportion of times the system is in state $(i,j)$ is equal to the proportion of times we reach $j$ pluses through a previous step being $i$. When $i=1$, this is equal to the sum of the proportion of times the
\begin{itemize} 
\item  number of pluses is $j$, multiplied by $\frac{j}{L}p$, representing selecting a plus and keeping it. 
\item number of pluses is $j-1$, multiplied by $\frac{L-j+1}{L}(1-p)$, representing selecting a minus and flipping it. 
\end{itemize} 
Therefore, we expect
$$ \pi(1,j) = \frac{\binom{L}{j}}{2^L} \frac{j}{L}p + \
\frac{\binom{L}{j-1}}{2^L}\frac{L-j+1}{L}(1-p) = \frac{\binom{L}{j}}{2^L}\frac{j}{L}.$$ 
Repeating the heuristics for $i=-1$, and verifying with the validity of the formula through a direct computation we arrive at 
\begin{prop}
The stationary distribution for the transition function $p$ of \eqref{eq:disordered} for $0\leq p<1$ is given by 
\begin{equation}
\label{eq:pi_LL}
\pi(i,j) = \frac{\binom{L}{j}}{L2^L}\times\begin{cases} j & i=1 \\ L-j & i=-1 \end{cases} 
\end{equation} 
\end{prop}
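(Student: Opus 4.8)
The plan is to verify directly that the row vector $\pi$ defined by \eqref{eq:pi_LL} satisfies $\pi P = \pi$ and sums to $1$; uniqueness is then automatic, since $P$ is irreducible (as noted just after \eqref{eq:disordered}) and, by the discussion in Section~\ref{seC:potential}, an irreducible chain has a unique stationary distribution. First I would record the marginal $\mu(j) := \sum_{i}\pi(i,j)$, the sum being over those $i$ with $(i,j)\in\Omega$: for $1\le j\le L-1$ both $(1,j)$ and $(-1,j)$ lie in $\Omega$ and $\mu(j)=\frac{\binom{L}{j}}{L2^L}\big(j+(L-j)\big)=\frac{\binom{L}{j}}{2^L}$, while for $j=0$ only $(-1,0)$ contributes and for $j=L$ only $(1,L)$, and in both cases one still gets $\mu(j)=\frac{\binom{L}{j}}{2^L}$. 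Thus $\mu$ is the $\mathrm{Bin}(L,\tfrac12)$ distribution, and normalization $\sum_{(i,j)\in\Omega}\pi(i,j)=\sum_{j=0}^{L}\mu(j)=1$ is the binomial theorem.

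Next I would write down the balance equation at a target state. Reading off \eqref{eq:disordered}, the states mapping into $(1,j')$ (for $1\le j'\le L$) are exactly the $(i,j')$ — via $(i,j')\to(1,j')$ with probability $\tfrac{j'}{L}p$ — and the $(i,j'-1)$ — via $(i,j'-1)\to(1,j')$ with probability $\tfrac{L-j'+1}{L}(1-p)$ — so stationarity at $(1,j')$ reads
\begin{equation*}
\pi(1,j') \;=\; \mu(j')\,\frac{j'}{L}\,p \;+\; \mu(j'-1)\,\frac{L-j'+1}{L}\,(1-p).
\end{equation*}
The key algebraic fact is the Pascal-type identity $\binom{L}{j'-1}(L-j'+1)=\binom{L}{j'}\,j'$, which gives $\mu(j'-1)\frac{L-j'+1}{L}=\mu(j')\frac{j'}{L}$; the two terms on the right then collapse to $\mu(j')\frac{j'}{L}=\frac{\binom{L}{j'}}{L2^L}j'=\pi(1,j')$. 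The computation at $(-1,j')$ (for $0\le j'\le L-1$) is the mirror image: the incoming states are $(i,j')$ with weight $\tfrac{L-j'}{L}p$ and $(i,j'+1)$ with weight $\tfrac{j'+1}{L}(1-p)$, the relevant identity is $\binom{L}{j'}(L-j')=\binom{L}{j'+1}(j'+1)$, and the terms collapse to $\mu(j')\frac{L-j'}{L}=\frac{\binom{L}{j'}}{L2^L}(L-j')=\pi(-1,j')$.

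I do not expect a genuine obstacle here — this is a routine verification — so the only points that demand care are the boundary indices. One must check that the transition probabilities out of $(i,0)$ and $(i,L)$ sum to $1$ and keep the chain inside $\Omega$ (the ``missing'' moves having coefficient $0$), and that the incoming-state lists above remain correct at $j'=1,L$ (for $i'=1$) and $j'=0,L-1$ (for $i'=-1$); in each such case one of the two contributing terms is simply absent, but the surviving term already equals the claimed value of $\pi$ because the spurious binomial coefficient ($\binom{L}{-1}$ or $\binom{L}{L+1}$) is zero. As a sanity check and a conceptual cross-reference, I would note that $\pi(i,j)/\mu(j)$ equals $j/L$ for $i=1$ and $(L-j)/L$ for $i=-1$, which is exactly what the heuristic preceding the proposition predicts: in stationarity the $L$ signs in the jar are i.i.d.\ $\mathrm{Ber}(\tfrac12)$, and conditionally on there being $j$ pluses the most recent new sign is equally likely to be any one of the $L$ signs present.
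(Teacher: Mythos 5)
Your proposal is correct and follows essentially the same route as the paper: the paper derives the formula heuristically (via the $\mathrm{Bin}(L,\tfrac12)$ marginal and the balance equation at each state, which is exactly your displayed identity) and then appeals to ``a direct computation'' for the verification, which is precisely the $\pi P=\pi$ check you carry out with the Pascal-type identities. Your handling of the boundary states via the marginal $\mu(j)$ is sound, so no further comment is needed.
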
 
\begin{proof}[Proof of Theorem \ref{th:disordered}]
Recall that  ${\bf X}$ is an additive functional for ${\bf Y}$ with the function $f(i,j)=i$. 
Next, we observe that by symmetry, $\pi(f)=0$, hence $f=\bar f$, and the first statement follows from Theorem \ref{th:CLT}. We turn now to the variance calculation. 

It is not hard to verify that the function\footnote{originally obtained using Mathematica}
\begin{equation}\label{eq:Uf_LL} {\bar U}_f(i,j) = \frac{1-2p}{1-p} \left(\frac{L}{2}-j\right) + i\end{equation} 
is the solution to \eqref{eq:unique_sol}. In addition, since 
$$ \left(P{\bar U}_f\right)(i,j) =  {\bar U}_f(i,j) - i,$$ 
we have 
$$ \left({\bar U}_f\right)^2 - \left(P{\bar U}_f\right)^2 = 2{\bar U}_f (i,j) i -1.$$ 
Now since $\pi\left({\bar U}_f\right)=0$, it follows that 
$$\pi\left({\bar U}_f \times i \right) = \pi\left({\bar U}_f \times (i+1)\right)=2 \sum_{(1,j)}\pi(1,j)U_f(1,j).$$
Hence, 
$$\sigma^2_f =4 \sum_{(1,j)} \pi(1,j) {\bar U}_f (1,j) -1.$$ 
Thus, by \eqref{eq:pi_LL} and by \eqref{eq:Uf_LL} we obtain 
$$\sigma^2_f = 4 \sum_{j=1}^L \frac{\binom{L}{j}}{2^L} \frac{j}{L}\left(\frac{1-2p}{1-p}\left(\frac{L}{2}-j\right)+1\right)-1.$$ 
Now let $B\sim \mbox{Bin}(L,\frac 12)$. Then 
\begin{align*} \sigma^2_f&= 4 E\left[ \frac{B}{L} \left(\frac{1-2p}{1-p} \left(\frac{L}{2}-B\right)+1\right)\right]-1\\
 & =4 \left ( \frac{1-2p}{1-p} \left(\frac L4  - \frac 14 (1+L)\right)+\frac 12\right) -1\\
 & =2-  \frac{1-2p}{1-p}-1\\
 & =\frac{p}{1-p}. 
\end{align*}
\end{proof} 

\subsubsection{Finite ordered memory.} 
\label{sec:ordered}
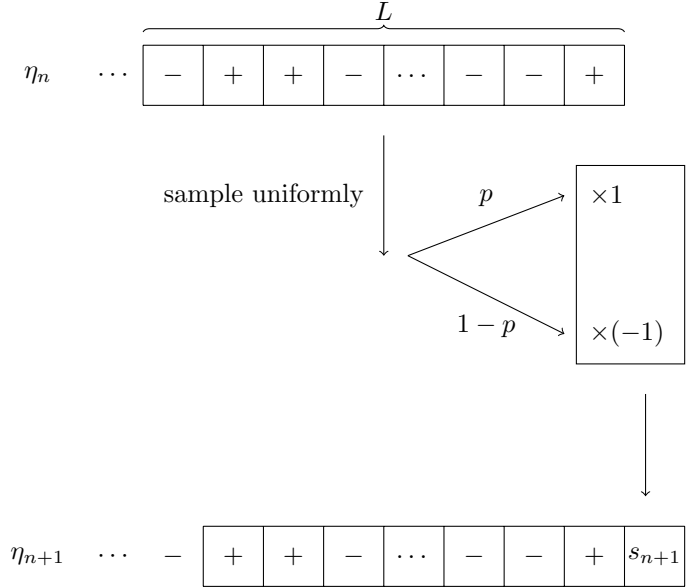
\begin{figure}[h!]
\centering
\begin{tikzpicture}[scale=.8]
     \draw[decoration={brace,raise=5pt},decorate] (1,10) -- node[above=6pt] {$L$} (9,10);
    \draw (1, 10) grid (9, 9);
    \setrow{\cdots}{-}{+} {+}{-}{\dots } {-}{-}{+}
    \node[anchor=center] at (-0.75, 9.5) {$\eta_n$};
    \draw[->] (5,8.5) -- node[left=4pt] {sample uniformly} (5,6.5);
    \draw[->] (5.4,6.5) -- node[above=4pt] {$p$}  (8,7.5) node[right=6pt] {$\times 1$};
     \draw[->] (5.4,6.5) -- node[below=4pt] {$1-p$}  (8,5.2) node[right=6pt] {$\times (-1)$};
       \draw (8.2,8)  rectangle (10,4.7);
      \draw[->] (9.35,4.2) -- (9.35,2.5);
      \draw (2, 2) grid (10, 1);
      \setcounter{row}{8}
      \setrow{\cdots}{-}{+} {+}{-}{\dots } {-}{-}{+}
      \node[anchor=center] at (9.5, 1.5) {$s_{n+1}$};
      \node[anchor=center] at (-0.75, 1.5) {$\eta_{n+1}$};
  \end{tikzpicture}
\caption{One-step transitions for the process $\eta_{\cdot}$}
\label{Urn2}
\end{figure}
Here the jar will be an (ordered) list of length $L$ consisting of $\pm1$'s.
Let $\eta_n=(\eta_n(0),\eta_n(1),\dots,\eta_n(L-1))$ be the list at time $n$.  Start with an arbitrary list $\eta_0$, and arbitrary $X_0$. Assuming $(\eta_j,X_j),~j=0,\dots,n$ are defined, continue as follows: 
\begin{enumerate} 
\item Sample $\pm1$ with probability proportional to the number of the items of the same type in the list. 
\item Accept the sampled sign with probability $p$. Otherwise, multiply by $-1$. The resulting sign is the new sign, $s_{n+1}$. 
\item Let $X_{n+1}=X_n+s_{n+1}$. 
\item Let $\eta_{n+1} = (\eta_n(1),\dots,\eta_n(L-1),s_{n+1})$. 
\end{enumerate}
Here we always discard the ``oldest'' sign in the list. The evolution of the list is a Markov chain, and ${\bf X}$ is an additive functional of the chain. We call it the ERW with finite ordered memory of length $L$ and acceptance probability $p$. 
\begin{thm}
\label{th:ordered} 
The ERW with finite ordered memory of length $L$ and acceptance probability $0\leq p<1$ satisfies
\begin{align*}
 \lim_{t\to\infty} \frac{X_t}{t}=0,\mbox{ a.s.}\\
 \frac{X_t}{\sqrt{t}}\Rightarrow N(0, \sigma^2),
\end{align*} 
where 
\[ \sigma^2 = 
\frac{L-1+2p}{2(1-p)\Big(2(1-p) L +2p-1\Big)}.
\]
\end{thm}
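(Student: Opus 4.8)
The plan is to realize $X_t$ as an additive functional of the list-valued Markov chain $\eta=(\eta_n)_{n\ge0}$ on $\Omega=\{-1,1\}^L$, apply Theorem~\ref{th:CLT}, and extract the variance from part~2 of Lemma~\ref{lem:var_lemma}. First I would record the setup. For $0\le p<1$ the chain $\eta$ is irreducible on $\Omega$: when $0<p<1$ either sign can be appended from any state (both ``keep'' and ``flip'' have positive probability), so every state is reached within $L$ steps; when $p=0$ the only states from which a prescribed sign cannot be appended are the two constant lists, and from a constant list one moves to a non-constant one, after which the previous remark applies. Write $s_n$ for the $n$-th new sign, so that $s_n=\eta_n(L-1)$ and $X_t-X_0=\sum_{k=1}^{t}s_k$. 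Then $X_t-X_0$ differs from the additive functional $I_t(f)$ with $f(\eta)=\eta(L-1)$ by the bounded quantity $\eta_t(L-1)-\eta_0(L-1)$, so the law of large numbers and central limit theorem for $X_t$ follow from Theorem~\ref{th:CLT} applied to $f$, and $\sigma^2=\lim_{t\to\infty}\mathbb{E}_\pi\!\left[(X_t-X_0)^2\right]/t$. The identity used throughout is that, since the sampled sign is $+1$ with probability (number of pluses)$/L$ and is then kept with probability $p$,
\[
\mathbb{E}\big[s_{n+1}\mid\eta_n\big]=\frac{2p-1}{L}\sum_{i=0}^{L-1}\eta_n(i).
\]

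Next I would prove the key structural fact that the stationary distribution $\pi$ is \emph{exchangeable}, i.e.\ $\pi(\epsilon)$ depends only on the number of pluses $k(\epsilon)$. Substituting the ansatz $\pi(\epsilon)=g\big(k(\epsilon)\big)$ into $\pi=\pi P$ and grouping the two one-step preimages of a state $\epsilon'$ according to the dropped coordinate, the balance equation collapses — for both values of the appended sign $\epsilon'_{L-1}$ — to the single first-order recursion
\[
g(k+1)\big(1-q_+(k+1)\big)=g(k)\,q_+(k),\qquad q_+(k):=\frac{(2p-1)k}{L}+(1-p),
\]
for $k=0,\dots,L-1$. Since $1-q_+(j)>0$ for every $1\le j\le L$ when $0\le p<1$, this has a positive solution, unique up to normalization, and by uniqueness of the stationary distribution $\pi$ must be of this form. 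In particular $\pi$ is invariant under $\epsilon\mapsto-\epsilon$, so $\pi(f)=0$, $\bar f=f$, and the law of large numbers follows at once. Exchangeability also pins down the pairwise correlation $\rho:=\mathbb{E}_\pi[\eta_0(i)\eta_0(j)]$ for $i\ne j$: since $\eta_1=(\eta_0(1),\dots,\eta_0(L-1),s_1)$ has law $\pi$, exchangeability gives $\mathbb{E}_\pi[\eta_0(a+1)\,s_1]=\rho$ for $0\le a\le L-2$, while the displayed conditional expectation gives $\mathbb{E}_\pi[\eta_0(j)\,s_1]=\tfrac{2p-1}{L}\big(1+(L-1)\rho\big)$; equating these yields
\[
\rho=\frac{2p-1}{2(1-p)L+2p-1}.
\]

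Finally I would compute the variance from part~2 of Lemma~\ref{lem:var_lemma} with $\mu=\pi$: expanding the square and using stationarity of $(s_n)_{n\ge1}$ under $\mathbb{P}_\pi$ gives $\sigma^2=1+2\sum_{m\ge1}c_m$ with $c_m:=\mathbb{E}_\pi[s_1s_{1+m}]$ (the series converges geometrically by the spectral gap behind Theorem~\ref{th:CLT}, or directly from the recursion below). Conditioning on $\eta_m$, inserting $\mathbb{E}[s_{1+m}\mid\eta_m]=\tfrac{2p-1}{L}\sum_i\eta_m(i)$, expressing the coordinates of $\eta_m$ through $\eta_0(m),\dots,\eta_0(L-1)$ and $s_1,\dots,s_m$, and again using stationarity of $(s_n)$, one obtains the two-regime recursion
\[
c_m=\frac{2p-1}{L}\sum_{j=m-L}^{m-1}c_j\quad (m\ge L),\qquad
c_m=\frac{2p-1}{L}\Big((L-m)\rho+\sum_{j=0}^{m-1}c_j\Big)\quad(1\le m\le L-1),
\]
with $c_0=1$. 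Summing each family over $m$, the $L$-dependent cross-terms $\sum_{j=0}^{L-2}(L-1-j)c_j$ cancel, and with $C:=\sum_{m\ge0}c_m$ one is left with $2(1-p)C=1+\tfrac12(2p-1)(L-1)\rho$. Substituting the value of $\rho$ and simplifying — the numerator $2pD+(2p-1)^2(L-1)$, where $D:=2(1-p)L+2p-1$, collapses to $L-1+2p$ — gives $\sigma^2=2C-1$ equal to the expression in the statement. I expect the main obstacle to be the middle step: cleanly establishing exchangeability of $\pi$ and the consistency of the recursion for $g$. Once that and the resulting value of $\rho$ are in hand, identifying $X_t$ with an additive functional and summing the $c_m$-recursion are routine, with the cancellation of all the $L$-dependent terms doing the real work in the final algebra.
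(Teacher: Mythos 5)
Your route is essentially the paper's: realize $X_t$ as an additive functional of the list chain, show the stationary law depends only on the number of pluses, turn the one-step conditional mean of the new sign into a linear recursion for the correlations, solve for the single unknown off-diagonal correlation, and sum the series to obtain the variance from part 2 of Lemma~\ref{lem:var_lemma}. Your bookkeeping in $\pm1$ coordinates with $c_m=\mathbb{E}_\pi[s_1s_{1+m}]$ is a clean equivalent of the paper's $\bar\rho_n$ computation (indeed $c_m=4\bar\rho_m$), your $\rho=(2p-1)/(2(1-p)L+2p-1)$ matches the paper's $4\bar\rho_1$, and the identity $2(1-p)C=1+\tfrac12(2p-1)(L-1)\rho$ with $\sigma^2=2C-1$ does reduce to the stated formula.

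The one genuine gap is the summability of $\sum_m c_m$ at $p=0$. Your first justification, ``directly from the recursion,'' needs $|2p-1|<1$: the bound $|c_m|\le|2p-1|\max_{m-L\le j<m}|c_j|$ yields geometric decay only for $0<p<1$. Your second, ``the spectral gap behind Theorem~\ref{th:CLT},'' does not exist as stated --- that theorem is proved by recurrence, not by a spectral estimate --- and an appeal to Perron--Frobenius for the chain itself would require checking aperiodicity at $p=0$, where the chain has no self-loops (a state has a self-loop only if it is constant and appends its own sign, which has probability $0$ when $p=0$). The paper closes exactly this hole with a dedicated argument: writing the recursion in companion-matrix form $A$ and showing $\sigma(A)\le\sqrt{\sigma(|A^2|)}<1$ for $L\ge2$ because $|A^2|$ is an irreducible substochastic matrix with a row sum strictly less than $1$. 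Relatedly, at $L=1$, $p=0$ your series is $1+2\sum_{m\ge1}(-1)^m$, which diverges, so the identity $\sigma^2=1+2\sum_{m\ge1}c_m$ fails there; the paper excludes $L=1$ outright by its equivalence with the disordered model (Remark~\ref{rem:equivalence}). To complete your argument you should either restrict the series computation to $0<p<1$ and treat $p=0$ separately, or supply the $p=0$, $L\ge2$ decay argument and dispose of $L=1$ on its own.
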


\begin{rem}\label{rem:equivalence}
When $L=1$, the ordered memory and disordered memory ERW are identical so both variances agree in this case.
\end{rem}
\begin{rem}
In the limit as $L\to\infty$, the variance approaches $\frac{1}{4(1-p)^2}$.
\end{rem}
\begin{rem}\label{rem:ordered_p0}
When $p=0$, the variance is $\frac{L-1}{4L-2}$. This should be contrasted with what happens in the disordered memory ERW, see Remark \ref{rem:bounded_displacement}.
\end{rem}
\begin{rem}
When $p=1$, the states where the list is all $+1$'s or all $-1$'s are both absorbing states. Consequently, the ordered memory ERW will eventually become degenerate and only take steps in one direction. The distribution of the limiting direction will depend on the initial state of the list.
\end{rem}
To study this model, we introduce a Markov chain on the state space $\Omega= \{0,1\}^L$, where for $\omega\in \Omega$, we write $\omega= (\omega_0,\dots,\omega_{L-1})$, and $|\omega| = \sum_j \omega (j)$. The corresponding Markov chain will be denoted by  $(\eta_n:n\in\Z_+)$, where for each $n$, $\eta_n$ is an element in $\Omega$. The transition function $Q$ is given by the following formula: 
\begin{equation}
    \label{eq:ordered} 
Q \Big(\eta,\big(\eta(1),\dots,\eta(L-1),s \big) \Big) =\begin{cases} 
\frac{1}{L}\big (  |\eta| p +(L-|\eta|)(1-p)\big) & s=1 \\
\frac{1}{L} \big((L-|\eta|)  p +|\eta| (1-p)\big) & s=0\\
0 & \mbox{otherwise}\end{cases}
\end{equation} 
The connection to the ERW with finite ordered memory is clear: $2\eta_{n+1}(L-1)-1$ represents the step $X_{n+1}-X_n$. The reason why we choose $0$ and $1$ rather than $\pm1$ is to simplify notation. It will also be more convenient to work with the left-most entries of $\eta_n$, hence we note here that 
\begin{equation}\label{eq:eta_to_X}
X_{n+1}-X_n=2\eta_{n+L}(0)-1.
\end{equation}

We now find the stationary distribution for $Q$. We do this by making an assumption and verifying that the resulting distribution satisfies the requirements. We will assume that the stationary distribution $\pi$ is only a function of the number of $1$'s (or equivalently, the number of $0$'s). That is $\pi (\omega) = g(|\omega|)$ for some $g$. For this reason, we will sometimes abuse notation for $k\in \{0,\dots,L\}$ and write $\pi(k)$ meaning $\pi (\omega)$ for $\omega$ with $|\omega|=k$. With this assumption, we are led to the following expression for $\pi$:
\begin{prop}
Suppose $0\leq p<1$. Then the  stationary distribution $\pi$  for $Q$ defined in \eqref{eq:ordered} is
\begin{equation}\label{eq:stat}
    \pi (\omega) = Z^{-1} \prod_{j=0}^{|\omega|-1} \frac{c_j}{c_{L-j-1}},
\end{equation}
where $Z$ is a normalization constant and
\begin{equation}\label{eq:c_j}
c_j=(1-p)\left(1-\frac{j}{L}\right)+\frac{j}{L}p.
\end{equation}
\end{prop}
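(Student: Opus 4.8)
The plan is to verify directly that the row vector $\pi$ given by \eqref{eq:stat}--\eqref{eq:c_j} satisfies the stationarity identity $\pi Q=\pi$; since $Q$ is irreducible when $0\le p<1$ (which is easily checked), this pins down $\pi$ as the unique stationary distribution. The computation rests on two elementary observations about the numbers $c_j$ defined in \eqref{eq:c_j}. First, $c_j$ is exactly the probability of the new sign being $s=1$ when the current list has $j$ ones: comparing with \eqref{eq:ordered} one sees $Q\big(\eta,(\eta(1),\dots,\eta(L-1),1)\big)=c_{|\eta|}$ and $Q\big(\eta,(\eta(1),\dots,\eta(L-1),0)\big)=1-c_{|\eta|}$. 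Second, expanding \eqref{eq:c_j} gives the identity $1-c_k=c_{L-k}$ for $k=0,1,\dots,L$; this is what makes the recursion below telescope cleanly.

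Next I would write out the stationarity equation at an arbitrary state $\eta'$. Under $Q$ the state $\eta'$ has precisely two predecessors, the lists $(a,\eta'(0),\dots,\eta'(L-2))$ with $a\in\{0,1\}$, obtained by prepending a bit and discarding the last coordinate $s:=\eta'(L-1)$; such a predecessor has $a+|\eta'|-s$ ones. Plugging the ansatz $\pi(\omega)=g(|\omega|)$ into $\pi(\eta')=\sum_{a\in\{0,1\}}\pi\big((a,\eta'(0),\dots,\eta'(L-2))\big)\,Q\big((a,\eta'(0),\dots,\eta'(L-2)),\eta'\big)$ and separating the cases $s=1$ and $s=0$, both collapse, after a shift of index in the second case, to the single two-term recursion
\[
g(k)\big(1-c_k\big)=g(k-1)\,c_{k-1},\qquad k=1,\dots,L.
\]
The boundary values $k=0$ and $k=L$ warrant a quick look but present no difficulty, since a would-be predecessor with a negative or too-large number of ones simply does not occur.

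Finally, I would solve the recursion. By the identity $1-c_k=c_{L-k}$ it becomes $g(k)=g(k-1)\,c_{k-1}/c_{L-k}$, so telescoping from $k=1$ upward yields $g(k)=g(0)\prod_{j=0}^{k-1}c_j/c_{L-1-j}$, which is precisely \eqref{eq:stat} with $Z^{-1}=g(0)$ and $Z=\sum_{k=0}^{L}\binom{L}{k}\prod_{j=0}^{k-1}c_j/c_{L-1-j}$ determined by normalization. I would also note in passing that when $p<1$ each $c_j$ with $0\le j\le L-1$ is strictly positive, so all the denominators appearing in \eqref{eq:stat} are nonzero and the expression is well defined. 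I expect no analytic difficulty; the only real obstacle is the bookkeeping in the middle step — correctly counting the ones in each of the two predecessors and checking that the cases $s=0$ and $s=1$ produce the \emph{same} recursion and not two conflicting ones (they agree precisely because of the index shift $k\mapsto k-1$).
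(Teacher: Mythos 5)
Your proposal is correct and follows essentially the same route as the paper: identify the two predecessors of each state, plug in the ansatz $\pi(\omega)=g(|\omega|)$, reduce both cases $s=0$ and $s=1$ to the single recursion $g(k)(1-c_k)=g(k-1)c_{k-1}$, and close it with the symmetry $1-c_k=c_{L-k}$. The only (cosmetic) difference is that you solve the recursion by telescoping to derive \eqref{eq:stat}, whereas the paper substitutes the stated formula and verifies the resulting identity directly.
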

\begin{proof}
For any $\omega$, transition into $\omega$ can occur from only two states: $\omega_1= (1,\omega(0),\dots,\omega(L-2))$ or $\omega_{0} = (0,\omega(0),\dots,\omega(L-2))$. Let's suppose first that $\omega(L-1)=1$ and $|\omega|=k$ for $k\in\{1,\dots,L\}$. Then the equation that $\pi$ needs to satisfy is 
$$ \pi(\omega_1) Q (\omega_1,\omega) + \pi (\omega_{0}) Q(\omega_{0},\omega) =\pi (\omega).$$ 
Since $|\omega_1|= |\omega|=k$, we need to check that  
$$ \prod_{j=0}^{k-1} \frac{c_j}{c_{L-j-1}} Q (\omega_1,\omega) + \prod_{j=0}^{k-2} \frac{c_j}{c_{L-j-1}} Q (\omega_{0},\omega) = \prod_{j=0}^{k-1} \frac{c_j}{c_{L-j-1}},$$ 
or equivalently
\begin{equation}\label{eq:stat_check}
Q (\omega_{0},\omega)c_{L-k} = \big(1-Q (\omega_1,\omega)\big)c_{k-1}.
\end{equation}

In order to prove that \eqref{eq:stat_check} holds, we start with some observations about the $c_j$'s. While the form of \eqref{eq:c_j} was chosen to make obvious the fact that $c_j>0$ for $0\leq j\leq L-1$ as long as $0\leq p<1$, the following formulation will be more convenient in what follows:
\begin{equation}\label{eq:c_j_alt}
c_j=1-p+\frac{j}{L}(2p-1).    
\end{equation}
From \eqref{eq:c_j_alt} we can observe the symmetry property
\begin{align}\label{eq:symmetry}
1-c_j &= p - \frac{j}{L}(2p-1)\nonumber \\
&= p - (2p-1) + \frac{L-j}{L}(2p-1)\nonumber \\
&=c_{L-j}.
\end{align}

Using \eqref{eq:ordered} along with \eqref{eq:c_j_alt} we can now write 
\begin{align*}
Q(\omega_1,\omega) &= \frac{1}{L} \big ( k p +(L-k)  (1-p)\big)\\
&=1-p+\frac{k}{L}(2p-1)\\
&=c_k
\end{align*}
and 
\begin{align*}
Q(\omega_{0},\omega) &= \frac{1}{L}\big ( (k-1) p + (L-k+1)(1-p)\big)\\
&=1-p+\frac{k-1}{L}(2p-1)\\
&=c_{k-1}.
\end{align*}
From this it is straightforward to verify \eqref{eq:stat_check} using the symmetry property \eqref{eq:symmetry}.

Next we turn to the case where $\omega(L-1)=0$, so now $k\in \{0,\dots, L-1\}$ and $|\omega_0|=|\omega|=k$. Similarly to the previous case, we need to check that  
$$ \prod_{j=0}^{k} \frac{c_j}{c_{L-j-1}} Q (\omega_1,\omega) + \prod_{j=0}^{k-1} \frac{c_j}{c_{L-j-1}} Q (\omega_{0},\omega) = \prod_{j=0}^{k-1} \frac{c_j}{c_{L-j-1}},$$ 
or equivalently
\begin{equation}\label{eq:stat_check_alt}
Q (\omega_{1},\omega)c_{k} = \big(1-Q (\omega_0,\omega)\big)c_{L-k-1}.
\end{equation}
Using \eqref{eq:ordered} and \eqref{eq:c_j_alt} as before, we can write 
$$Q(\omega_1,\omega) =1-c_{k+1}$$
and 
$$Q(\omega_{0},\omega) =1-c_{k}.$$
Hence \eqref{eq:stat_check_alt} follows from the symmetry property \eqref{eq:symmetry}.
\end{proof}

Using the stationary distribution \eqref{eq:stat}, we can recover some interesting and important identities. Note that for $n\ge m$, $\mathbb{E}_{\pi} [\eta_m(0) \eta_n(0)]$ depends only on the difference $n-m$, and is equal to 
$\rho_{n-m} := \mathbb{E}_{\pi} [\eta_0(0)\eta_{n-m}(0)]$. Clearly, $\rho_0=\frac 12$. In addition, since $\pi$ is only a function of the number of $1$'s, it follows that $\rho_{1}=\dots=\rho_{L-1}$. We turn to the general case. Suppose $n \ge L$. Then 
$$\rho_n =  \mathbb{E}_{\pi}[\eta_0(0) \eta_n(0)] =\mathbb{E}_{\pi}\left[ \eta_0(0) \mathbb{E}[\eta_n(0) | \eta_{n-1},\eta_{n-2},\dots,\eta_{n-L},\dots,\eta_0]\right].$$ 
The conditional expectation is equal to $\frac{2p-1}{L} (\eta_{n-L}(0)+\dots+ \eta_{n-1}(0))+  (1-p)$. Therefore, we found that 
$$ \rho_n =\frac{2p-1}{L} \sum_{j=1}^L \rho_{n-j}+ \frac{1-p}{2}.$$
Let $\bar\rho_n=\rho_n - \frac 14$. Then 
\begin{equation} 
\label{eq:barrho} 
\bar\rho_n = \frac{2p-1}{L} \sum_{j=1}^L \bar \rho_{n-j}.
\end{equation} 
From this it is clear that $\bar\rho_n \to 0$ exponentially fast when $0<p<1$. In fact, this also occurs when $p=0$ as long as $L\geq 2$. To see this, we examine the matrix of the linear recursion and its square. Let $A$ be the recursion matrix when $p=0$. Then we have 
$$
A=\left[\begin{array}{cccc}
-\frac{1}{L}&-\frac{1}{L}&\cdots &-\frac{1}{L}\\
1&& &\\
&\ddots &&\text{\huge0}\\
\text{\huge0}&&1 &\\
\end{array}\right]
~~\text{ and }~~
A^2=\left[\begin{array}{cccc}
\frac{1-L}{L^2}&\cdots&\frac{1-L}{L^2} &\frac{1}{L^2}\\
-\frac{1}{L}&-\frac{1}{L}&\cdots &-\frac{1}{L}\\
1& &&\\
&\ddots&&\text{\huge0}\\
\text{\huge0}&&1&
\end{array}\right].
$$
Let $\sigma(\cdot)$ denote spectral radius and $|\cdot|$ denote the entry-wise absolute value. By viewing $|A^2|$ as the transition matrix of a (sub) Markov chain, it is clear that $|A^2|$ is an irreducible nonnegative matrix. Additionally, the maximum row sum is $1$ and the minimum row sum is strictly less than $1$. Hence the Perron-Frobenius theorem implies that $\sigma(|A^2|)<1$. Now it follows from Lemma 2.4 of \cite{Varga} that
\[
\sigma(A)=\sqrt{\sigma\left(A^2\right)}\leq\sqrt{\sigma\left(\left|A^2\right|\right)}<1.
\]
Thus $\bar\rho_n \to 0$ exponentially fast when $0\leq p<1$ and $L\geq 2$.

\begin{proof}[Proof of Theorem \ref{th:ordered}] 
In light of Remark \ref{rem:equivalence}, we can exclude the case $L=1$ so from now on we assume $L\geq 2$. The first statement follows from symmetry and we will omit the proof. For the second, we will study the additive functional $S_n := \sum_{j=0}^{n-1} \eta_j (0)$. From \eqref{eq:eta_to_X}, we see that 
\begin{align*}
X_n&=X_0+2\sum_{j=0}^{n-1}\eta_{j+L}(0)-n\\
&=X_0+2\left(S_n-\frac{n}{2}\right)+\underbrace{2\left(\sum_{j=n}^{n-1+L}\eta_j(0)-\sum_{j=0}^{L-1}\eta_j(0)\right)}_{\displaystyle\Delta_n}
\end{align*}
with $|\Delta_n|\leq 4L$. As a result, the variance we obtain for ${\bf S}=(S_n:n\in\Z_+)$ as an additive functional for $\eta$ will have to be multiplied by $4$. 

Due to Theorem \ref{th:CLT}, we know that $(S_n-\frac{n}{2})/\sqrt{n}$ converges weakly to $N(0,\sigma^2)$. From the discussion in Sections \ref{sec:IID} and \ref{sec:subseq}, it is easy to conclude that the expression for the variance in Theorem \ref{th:CLT} is asymptotically equivalent to $\mathbb{E}_{\pi}[ (S_n- \mathbb{E}_{\pi} [S_n])^2] /n$. Hence we will find the limit of this expression as $n\to\infty$ and use it as the variance, instead of solving \eqref{eq:unique_sol}. 

 Clearly,  $\mathbb{E}_{\pi}[S_n] = \frac{n}{2}$. Next, 
\begin{align*} \mathbb{E}_{\pi}[S_n^2]&=n \rho_0 + 2 \sum_{i=0}^{n-2} \left(\sum_{j=i+1}^{n-1} \rho_{j-i}\right)\\
& = \frac{n}{2}+ 2\sum_{i=0}^{n-2}\left(\sum_{k=1}^{n-1-i}\rho_k\right)\\
& = \frac{n}{2}+ 2\sum_{k=1}^{n-1}\left(\sum_{i=0}^{n-k-1}\rho_k\right) \\
& = \frac{n}{2} +2\sum_{k=1}^n (n-k) \rho_k\\
& =\frac{n}{2} +2\sum_{k=1}^n (n-k) \bar\rho_k + \frac{1}{2} \sum_{k=1}^n (n-k) \\
& = \frac{n}{2} +2 n \sum_{k=1}^n\bar\rho_k -2\sum_{k=1}^n k \bar\rho_k+\frac{(n-1)n}{4}.
\end{align*}
Now using the exponential decay of the $\bar\rho_k$'s allows us to write
\begin{align*}
\mathbb{E}_{\pi}[S_n^2]& = \frac{n^2}{4} + \frac{n}{4} +2n \sum_{k=1}^n\bar\rho_k + O(1) \\
& = \mathbb{E}_{\pi}[S_n]^2 + n \left (\frac 14 + 2 \sum_{k=1}^n\bar \rho_k\right) + O(1). 
\end{align*} 
And so, 
\begin{equation} 
\label{eq:sn_variance} \sigma^2_{S_n} \sim n \left( \frac 14+ 2 \sum_{\ell=1}^\infty \bar \rho_{\ell} \right).
\end{equation} 
To calculate this quantity, we will use the recurrence relation \eqref{eq:barrho}. Let $a=\sum_{\ell=1}^\infty \bar\rho_\ell$. Then 
\begin{align*} a&= (L-1) \bar\rho_1+ \sum_{\ell=L}^\infty \bar\rho_\ell \\
 & =(L-1) \bar\rho_1 + \frac{2p-1}{L}\sum_{\ell= L}^{\infty}\sum_{j=1}^L \bar \rho_{\ell-j}\\
 & = (L-1) \bar \rho_1 + \frac{2p-1}{L} \sum_{\ell=L}^\infty \left(\sum_{k=\ell-L}^{\ell-1} \bar \rho_k\right)\\
 &= (L-1) \bar\rho_1 + \frac{2p-1}{L}\sum_{k=0}^\infty \left(\sum_{\ell=\max(k+1,L)}^{k+L}\bar\rho_k\right)\\
&=  (L-1) \bar\rho_1 +\frac{2p-1}{L}\left ( \sum_{k=0}^{L-1}(k+1) \bar\rho_k + L \sum_{k=L}^\infty \bar\rho_k\right)\\
& = (L-1) \bar\rho_1 + \frac{2p-1}{L}\left ( \bar\rho_0 +  \sum_{k=1}^{L-1}(k+1-L) \bar\rho_k + L \sum_{k=1}^\infty \bar\rho_k\right)\\
& = (L-1) \bar\rho_1 +\frac{2p-1}{L}\left ( \frac 14 - \sum_{j=1}^{L-2}j \bar \rho_1 + L a\right)\\
& = (L-1) \bar\rho_1  + \frac{2p-1}{4L} - (2p-1)\frac{(L-2)(L-1)}{2L}\bar\rho_1+  (2p-1) a.
 \end{align*} 
 Therefore, 
 \begin{equation}
 \label{eq:newais}
 a (2-2p) =(L-1) \left ( 1 -\frac{2p-1}{2} \frac{L-2}{L}\right) \bar\rho_1 + \frac{2p-1}{4L}.
 \end{equation} 

Next we calculate $\rho_1$. This is essentially the same computation as before, shifted: 
$$\rho_1 = \mathbb{E}_{\pi}[\eta_0(L-1)\eta_1(L-1)].$$ 
We will start by using the Markov property to write
$$\mathbb{E}[\eta_1(L-1)|\eta_0]=(\eta_0(0)+ \dots + \eta_0(L-1))\frac{2p-1}{L}+(1-p).$$ 
Therefore 
$$\rho_1 = \frac{2p-1}{L} \sum_{j=0}^{L-1} \rho_j  + \frac{1-p}{2}.$$ 
That is, 
$$ \rho_1 = \frac{2p-1}{L} \left ( \frac 12 + (L-1) \rho_1\right) + \frac{1-p}{2}.$$ 
Solving for $\rho_1$ we have 
$$ \rho_1 \left(1 - \frac{2p-1}{L}(L-1)\right) = \frac{2p-1}{2L} + \frac{1-p}{2}.$$ 
Or 
$$\rho_1 ( L - (2p-1) (L-1)) = \frac{2p-1}{2} + \frac{(1-p)L}{2},$$
or 
$$\rho_1 (2L(1-p) +2p-1)  =\frac {1}{2} \left ( 2p-1 + (1-p)L\right).$$ 
This means 
$$\bar\rho_1 (2L(1-p) + 2p-1) = \frac 12 \left ( 2p-1 + (1-p)L - L(1-p)-\frac{2p-1}{2}\right).$$ 
That is
\begin{equation} 
\label{eq:barrho1}
\bar\rho_1 = \frac{2p-1}{4(2L(1-p)  + 2p-1)}.
\end{equation}
Plugging this into \eqref{eq:newais} and \eqref{eq:sn_variance},   we obtain 
$$\sigma^2_{S_n/n} \sim \frac 14+ 2a = \frac14+  \frac{L-1}{1-p} \left ( 1 -\frac{2p-1}{2} \frac{L-2}{L}\right)\frac{2p-1}{4(2L(1-p)  + 2p-1)}  + \frac{2p-1}{4L(1-p)}.
$$
We multiply this by $4$, combine the fractions and simplify to  obtain
$$\frac{L-1+2p}{2(1-p)(2(1-p)L+2p-1)}$$
which is equivalent to the expression for $\sigma^2$ in the statement of  Theorem \ref{th:ordered}.
\end{proof} 

\bibliographystyle{alpha}
\bibliography{bibliography}

\end{document}